\newtheorem{theorem}{Theorem}[section]
\newtheorem{lemma}[theorem]{Lemma}
\newtheorem{corollary}[theorem]{Corollary}
\newtheorem{proposition}[theorem]{Proposition}
\newtheorem{conjecture}[theorem]{Conjecture}
\theoremstyle{definition}
\newtheorem{definition}[theorem]{Definition}
\newtheorem{example}[theorem]{Example}
\theoremstyle{remark}
\newtheorem{remark}[theorem]{Remark}
\numberwithin{equation}{section}
\begin{document}
\title{Equivalences for noncommutative projective spaces} 
\author{Jorge Vitória}\thanks{Supported by FCT - Portugal, research grant SFRH/BD/28268/2006. Special thanks to Izuru Mori, Yoichi Hattori and Michaela Vancliff for their helpful comments and remarks.}
\address{Mathematics Institute, University of Warwick\\ Coventry, CV4 7AL\\ UK \newline\indent email:  j.n.s.vitoria@warwick.ac.uk}
\begin{abstract}
Following Artin and Zhang's formulation of noncommutative projective geometry, we classify up a family of skew polynomial quadratic algebras up to graded Morita equivalence and their corresponding noncommutative projective spaces up to birational equivalences. We also study their point varieties and provide examples of non-isomorphic noncommutative projective spaces. 
\end{abstract}
\maketitle
\begin{section}{Introduction} 
Consider the following family of quadratic graded algebras, with deg$(X_i)=1$:
\begin{equation}\nonumber
S^n_{\omega} = \mathbb{K}\left\langle X_1,...,X_n\right\rangle/\left\langle X_jX_i-\omega_{ij}X_iX_j,\ i,j\in\left\{1,...,n\right\}\right\rangle,
\end{equation}
where $\mathbb{K}$ is an algebraically closed field of characteristic zero and $\omega_{ij}\in \mathbb{K}^{*}$, for all $i$ and $j$. Note that $\omega_{ij}\omega_{ji}=1$.

Our objects of study will be the spaces  $\mathbb{P}^{n-1}_{\omega}:=Proj(S^n_{\omega})$. These are the noncommutative projective spaces associated to $S^n_{\omega}$, following \cite{AZ}. They are pairs $(tails(S^n_{\omega}),\pi S^n_{\omega})$ where $tails(S^n_{\omega})$ is the quotient category of finitely generated graded modules over $S^n_{\omega}$, $gr(S^n_{\omega})$ by its subcategory of torsion modules and $\pi S^n_{\omega}$ is the projection of $S^n_{\omega}$ in the quotient category. $tails(S^n_{\omega})$ is, therefore, playing the role of coherent sheaves over $\mathbb{P}^{n-1}_{\omega}$ and $\pi S^n_{\omega}$ the role of structure sheaf of $\mathbb{P}^{n-1}_{\omega}$, which we shall denote by $O^n_{\omega}$. We shall also use $Gr(R)$ and $Tails(R)$ for the whole category of graded modules and the respective quotient. Hence, $Tails(S^n_{\omega})$ represents quasi-coherent sheaves over $\mathbb{P}^{n-1}_{\omega}$. This follows Artin and Zhang's (\cite{AZ}) formulation of noncommutative projective geometry. We follow the approach in \cite{AKO} to study these categories and we shall keep the notation therein unless stated otherwise. 

Recall (\cite{AZ},\cite{Mori}) that $\mathbb{P}^{n-1}_{\omega}\cong \mathbb{P}^{n-1}_{\omega'}$ if there is an equivalence of categories $F$ between $tails(S^n_{\omega})$ and $tails(S^n_{\omega'})$ such that $F(O^n_{\omega})=O^n_{\omega'}$. An equivalence at the level of graded modules naturally induces an isomorphism of noncommutative projective spaces and thus our aim in section 2 is to study these equivalences

Note that, since the rings $S^n_\omega$ are Noetherian domains, we may localise and hence form a division ring of fractions. We say that two noncommutative projective spaces $\mathbb{P}^{n-1}_\omega$ and $\mathbb{P}^{n-1}_{\omega'}$ are birationally equivalent if the degree zero of the corresponding division rings of fractions of $S^n_\omega$ and $S^n_{\omega'}$ are isomorphic. We study birational equivalences between these spaces in section 3.

An interesting invariant of such a space is its point variety. We recall the definition from \cite{ATV}. We start with point modules.

\begin{definition}[Artin, Tate, Van den Bergh]

A graded $R$-module $M$ is said to be a point module if:
\begin{itemize}
\item $M$ is generated in degree zero;
\item $M_0 = \mathbb{K}$;
\item $dim\ M_i = 1$, $\forall i\geq 0$.
\end{itemize}
\end{definition}

Point modules can then be parametrised, in a natural way, by the point scheme. Given $R$ a connected positively graded AS-regular algebra of dimension $n$ generated in degree one, let $I$ be such that $R\cong T(R_1)/I$ and note that, for each $i\in\mathbb{N}_0$, $I_i$ can be seen as a set of multilinear functions on $(V^*)^i$. Thus we can define projective schemes associated with $I_i$,
\begin{equation}\nonumber
\Omega_i:= \left\{(p_1,...,p_i)\in \mathbb{P}(V^*)^i: f(p_1,...,p_i)=0,\ \forall f\in I_i\right\},
\end{equation}
and clearly we have, for $i\leq j$, a map 
$pr^j_i: \Omega_j\longrightarrow \Omega_i$
which is the restriction of the projection from $\mathbb{P}(V^*)^j$ to $\mathbb{P}(V^*)^i$ on the first $i$ coordinates.
One can check that $\left\{\Omega_i, pr^j_i\right\}$ forms an inverse system of projective schemes (\cite{ATV}).

\begin{definition}[Artin, Tate, Van den Bergh,\cite{ATV}]
The point scheme of $R$ is the inverse limit of the inverse system of projective schemes $\left\{\Omega_i, pr^j_i\right\}$. We refer to the point variety when considering the reduced structure on the point scheme and we  denote it by $\Omega_R$.
\end{definition}

We compute the point variety for $\mathbb{P}^n_{\omega}$ in section 4. This invariant property is then crucial for the examples in section 5 and it is given by the following theorem proved by Mori in \cite{Mori}. Recall that the Gorenstein parameter of a AS-regular $\mathbb{K}$-algebra $R$ of global dimension $n$ is the integer $r$ such that  $\bigoplus\limits_{k\in\mathbb{Z}}{\rm Ext}_{Gr(R)}^n(\mathbb{K},R(k))\cong \mathbb{K}(r)$.

\begin{theorem}[Mori, \cite{Mori}]\label{invariant point}
Let $R$ and $R'$ be graded quotients of quantum polynomial rings of global dimension $n\geq1$ and Gorenstein parameters $r$ and $r'$ in $\mathbb{Z}\setminus\left\{0\right\}$ respectively. If $Proj(R)\cong Proj(R')$ then $\Omega_R \cong \Omega_{R'}$.
\end{theorem}
It is clear that this theorem applies to the algebras $S_{\omega}^n$.

A preliminary useful observation is that no noncommutative projective spaces of different dimensions are isomorphic. We provide an argument for that straight away, as this will ease our notation. 
Recall the following result.
\begin{proposition}[Auroux, Katzarkov, Orlov, \cite{AKO}]
The categories $D^b(tails(S^n_{\omega}))$ possess strong exceptional collections of length $n$, namely $(O_{\omega}^{n},O_{\omega}^{n}(1),...,O_{\omega}^n(n-1))$, where $O_{\omega}^n=\pi S^n_{\omega}$ is as before.
\end{proposition}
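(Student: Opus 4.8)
The plan is to verify the three defining conditions of a strong exceptional collection directly, by computing all the groups ${\rm Ext}^k_{tails(S^n_{\omega})}(O^n_{\omega}(i),O^n_{\omega}(j))$ for $0\leq i,j\leq n-1$ and $k\geq 0$. Writing $E_i=O^n_{\omega}(i)$, I must show: each $E_i$ is exceptional, i.e. ${\rm Hom}(E_i,E_i)=\mathbb{K}$ and ${\rm Ext}^k(E_i,E_i)=0$ for $k>0$; that ${\rm Hom}(E_i,E_j)=0$ whenever $i>j$, so that the collection is exceptional in the given order; and that ${\rm Ext}^k(E_i,E_j)=0$ for all $i,j$ and all $k>0$, which is precisely the strongness. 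Since $tails(S^n_{\omega})$ is abelian and the $E_i$ are sheaves concentrated in a single degree, these Ext groups may be computed in the abelian category itself.

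First I would reduce everything to the cohomology of a single line bundle. As the shift $(1)$ is an autoequivalence of $tails(S^n_{\omega})$, one has ${\rm Ext}^k(O^n_{\omega}(i),O^n_{\omega}(j))\cong {\rm Ext}^k(O^n_{\omega},O^n_{\omega}(j-i))=:H^k(O^n_{\omega}(j-i))$, so it suffices to understand $H^k(O^n_{\omega}(m))$ for $-(n-1)\leq m\leq n-1$. The algebra $S^n_{\omega}$ is a Noetherian AS-regular, indeed Koszul, algebra of global dimension $n$ with the Hilbert series $(1-t)^{-n}$ of a polynomial ring; this places us inside the Artin--Zhang formalism, where $tails(S^n_{\omega})$ has cohomological dimension $n-1$, satisfies the $\chi$ condition, and enjoys Serre duality with dualizing object $O^n_{\omega}(-r)$, where $r=n$ is the Gorenstein parameter.

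Next I would compute $H^k(O^n_{\omega}(m))$ using the quantum Koszul resolution of the trivial module $\mathbb{K}$ over $S^n_{\omega}$, whose Koszul dual is a skew exterior algebra. Applying ${\rm Hom}_{S^n_{\omega}}(-,S^n_{\omega})$ and passing to tails gives, exactly as in the commutative case: $H^0(O^n_{\omega}(m))\cong (S^n_{\omega})_m$, which is nonzero precisely when $m\geq 0$; the intermediate cohomology $H^k(O^n_{\omega}(m))$ vanishes for $0<k<n-1$; and, by Serre duality, $H^{n-1}(O^n_{\omega}(m))\cong H^0(O^n_{\omega}(-m-n))^{*}$, which is nonzero precisely when $m\leq -n$.

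Finally I would substitute $m=j-i$. As $0\leq i,j\leq n-1$ forces $-(n-1)\leq m\leq n-1$, the top cohomology (which would need $m\leq -n$) never appears and the intermediate cohomology vanishes, so only $H^0$ survives; this yields ${\rm Ext}^k(E_i,E_j)=0$ for every $k>0$, which is strongness. On the diagonal, $H^0(O^n_{\omega}(0))=(S^n_{\omega})_0=\mathbb{K}$ gives exceptionality of each $E_i$, and for $i>j$ we have $m<0$, so $H^0(O^n_{\omega}(m))=0$ and the order condition holds. The main obstacle is the cohomology computation of the third step: one must confirm that the skew relations do not perturb the commutative vanishing pattern, which rests on establishing AS-regularity of $S^n_{\omega}$ and carrying the Koszul and Serre-duality arguments through with the twisted exterior algebra in place of the usual one.
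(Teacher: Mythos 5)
Your proposal is correct and follows essentially the same route as the source: the paper does not reprove this proposition but quotes it from \cite{AKO}, where the verification is exactly your reduction via the shift autoequivalence to the cohomology groups $H^k(O^n_{\omega}(m))$, computed from AS-regularity of $S^n_{\omega}$ through the Artin--Zhang formalism \cite{AZ}. Your range check ($-(n-1)\leq j-i\leq n-1$, so the top cohomology, which requires $j-i\leq -n$, never appears and the intermediate cohomology vanishes) is the same key point, and the facts you need about $S^n_{\omega}$ (Noetherian, Koszul, AS-regular of global dimension $n$ with Gorenstein parameter $n$) are standard for iterated Ore extensions.
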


Such strong exceptional sequence gives us a derived equivalence between the categories $tails(S^n_{\omega})$ and $mod(B^n_{\omega})$ (\cite{AKO}) where 
\begin{equation}\nonumber
B^n_{\omega} := End_{D^b(tails(S^n_{\omega}))}(\bigoplus_{i=0}^{n-1} O^n_{\omega}(i)). 
\end{equation}
This algebra $B^n_{\omega}$ can be presented as a path algebra with relations, the quiver being the Beilinson quiver (n ordered vertices, n arrows between any two consecutive vertices) and the relations being described as follows. Let $\alpha^k_l$ be the $i$-th arrow starting at vertex $k$ (i.e., representing multiplication by $X_l$, mapping $O^n_{\omega}(k-1)$ to $O^n_{\omega}(k)$), then the ideal of relations is 
\begin{equation}\nonumber
\left\langle \alpha_j^k\alpha_i^{k-1}-\omega_{ij}\alpha_i^k\alpha_j^{k-1},\ 1\leq i,j\leq n, 1\leq k\leq n-1\right\rangle.
\end{equation}

For example, $B^4_{\omega}$ has a presentation as the path algebras of
\begin{equation}\nonumber
\xymatrix{{\bullet}^1\ar@<-4.5ex>[rr]^{\alpha_4^1}\ar@<-1.5ex>[rr]^{\alpha_{3}^1}\ar@<1.5ex>[rr]^{\alpha_{2}^1}\ar@<4.5ex>[rr]^{\alpha_{1}^1}&&{\bullet}^2\ar@<-4.5ex>[rr]^{\alpha_4^2}\ar@<-1.5ex>[rr]^{\alpha_{3}^2}\ar@<1.5ex>[rr]^{\alpha_{2}^2}\ar@<4.5ex>[rr]^{\alpha_{1}^2}&&{\bullet}^3\ar@<-4.5ex>[rr]^{\alpha_4^3}\ar@<-1.5ex>[rr]^{\alpha_{3}^3}\ar@<1.5ex>[rr]^{\alpha_{2}^3}\ar@<4.5ex>[rr]^{\alpha_{1}^3}&&{\bullet}^4}
\end{equation}
with relations $\alpha^k_j\alpha^{k-1}_i=\omega_{ij}\alpha^k_i\alpha^{k-1}_j$, where $k\in\left\{2,3\right\}$ and $1\leq i\neq j \leq 4$.

It is easy to see that such a strong exceptional sequence forms a basis for the Grothendieck group of the derived category (see, for example, \cite{BrSt}). Therefore, the number of elements in such sequence is preserved via derived equivalence.
\begin{lemma}
If $D^b(tails(S^n_{\omega}))\cong D^b(tails(S^m_{\omega'}))$ then $m=n$.
\end{lemma}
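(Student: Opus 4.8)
The plan is to produce a numerical invariant of the triangulated category that is preserved under equivalence and that reads off the integer $n$. The natural choice is the rank of the Grothendieck group $K_0$, and the key input is the Proposition above: since $D^b(tails(S^n_{\omega}))$ carries the strong exceptional collection $(O_{\omega}^n, O_{\omega}^n(1),\ldots,O_{\omega}^n(n-1))$ of length $n$ (and likewise $D^b(tails(S^m_{\omega'}))$ one of length $m$), it suffices to show that the length of such a collection equals the rank of $K_0$ and that $K_0$ is a derived invariant.

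First I would verify, following \cite{BrSt}, that the classes of the members of a strong exceptional collection form a $\mathbb{Z}$-basis of $K_0(D^b(tails(S^n_{\omega})))$, and not merely a generating set. They generate because the collection generates the triangulated category; for linear independence one computes the Gram matrix of the Euler pairing $\chi(E_i,E_j)=\sum_\ell(-1)^\ell\dim\mathrm{Ext}^\ell(E_i,E_j)$ on the classes $E_i=O_{\omega}^n(i)$. By the semiorthogonality built into an exceptional collection one has $\mathrm{Hom}(O_{\omega}^n(j),O_{\omega}^n(i)[\ell])=0$ for $i<j$ and all $\ell$, while $\mathrm{Hom}(O_{\omega}^n(i),O_{\omega}^n(i))=\mathbb{K}$, so this Gram matrix is upper triangular with $1$'s on the diagonal, hence invertible over $\mathbb{Z}$. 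Therefore the classes are independent, and $K_0(D^b(tails(S^n_{\omega})))$ is free abelian of rank exactly $n$.

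Next I would invoke the standard fact that $K_0$ is a triangulated invariant: an exact equivalence $F\colon D^b(tails(S^n_{\omega}))\to D^b(tails(S^m_{\omega'}))$ carries distinguished triangles to distinguished triangles, hence respects the relations defining $K_0$ and induces a group isomorphism $K_0(D^b(tails(S^n_{\omega})))\cong K_0(D^b(tails(S^m_{\omega'})))$. Comparing ranks gives $n=m$. There is no essential obstacle here, as the Proposition and \cite{BrSt} supply the decisive facts; the only point deserving care is precisely the passage from a generating set to a basis, i.e. the invertibility of the Euler Gram matrix, which is what pins the rank of $K_0$ to the length of the exceptional collection.
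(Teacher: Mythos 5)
Your proof is correct and takes essentially the same route as the paper: the paper likewise deduces $m=n$ by observing that the strong exceptional collection forms a basis of the Grothendieck group of the derived category (citing \cite{BrSt}) and that the rank of $K_0$ is preserved under derived equivalence. The only difference is that you spell out the detail the paper dismisses as ``easy to see,'' namely the upper-triangular, unimodular Euler Gram matrix argument showing the classes are a genuine $\mathbb{Z}$-basis rather than just generators.
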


In particular if the categories themselves are equivalent (and therefore derived equivalent) we get $m=n$. From now on we are thus interested in comparing the categories associated to $S^n_{\omega}$ and $S^n_{\omega'}$. The superscript $n$ will be dropped and assumed to be fixed.

\end{section}
\begin{section}{Graded equivalences}
Our target is to classify up to equivalence the categories $Gr(S^n_{\omega})$. For this we will use Zhang's theoy of twisting systems (\cite{Zh}) which we quickly review now. Let $R$ and $S$ be connected $\mathbb{N}_0$-graded right Noetherian $\mathbb{K}$-algebras. 

\begin{definition}[Zhang]
A twisting system is a set $\tau=\left\{\tau_n: n\in\mathbb{N}_0\right\}$ of $\mathbb{K}$-linear, degree preserving bijections from $R$ to $R$ satisfying:
\begin{equation}\nonumber
\tau_n(y\tau_m(z))=\tau_n(y)\tau_{n+m}(z),\ \forall l, m, n\in\mathbb{N}_0, y\in R_m,z\in R_l.
\end{equation} 

Given a twisting system over $R$ we can construct a new algebra structure on the underlying vector space $R$, the twisted algebra, by defining new multiplication $\bullet$:
\begin{equation}\nonumber
y\bullet z := y\tau_m(z),\  \forall y\in R_m, z\in R_l.
\end{equation}
\end{definition}

These two algebra structures are closely related as follows.
\begin{theorem}[Zhang, \cite{Zh}]\label{Zhang twist}
For $R$ and $S$ as above and such that $R_1\neq 0$, the following are equivalent:

\begin{enumerate}
\item $R$ is isomorphic (as a graded algebra) to a twist of $S$;
\item $Gr(R)$ is equivalent to $Gr(S)$;
\item There is an equivalence $\Phi$ between $Tails(R)$ and $Tails(S)$ such that shifts of the structure sheaf are preserved, i.e., $\Phi(\pi R(n))=\pi S(n)$, for all  $n\in\mathbb{Z}$.
\end{enumerate}
\end{theorem}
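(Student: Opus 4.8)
The plan is to prove the cycle of implications $(1)\Rightarrow(2)\Rightarrow(3)\Rightarrow(1)$, using the module-level twisting construction as the engine and the quotient functor $\pi$ as the bridge to the $Tails$ categories. Throughout I would exploit two structural facts about a connected $\mathbb{N}_0$-graded right Noetherian algebra $R$: first, that $R$ is an indecomposable graded projective object of $Gr(R)$ with $\operatorname{End}_{Gr(R)}(R)\cong R_0=\mathbb{K}$, so that the shifts $\{R(n)\}_{n\in\mathbb{Z}}$ form a set of small projective generators; and second, that the whole graded algebra can be reconstructed from the category together with its shift autoequivalence $s\colon M\mapsto M(1)$ as $R\cong\bigoplus_{n\geq0}\operatorname{Hom}_{Gr(R)}(R,R(n))$, with multiplication induced by composition after applying $s$.

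For $(1)\Rightarrow(2)$ I would build the equivalence explicitly from a twisting system $\tau$ on $S$ with $R\cong S^{\tau}$. Given a graded right $S$-module $M$, define $M^{\tau}$ to have the same underlying graded vector space but the deformed action $m\ast s:=m\cdot\tau_n(s)$ for $m\in M_n$; the defining identity $\tau_n(y\tau_m(z))=\tau_n(y)\tau_{n+m}(z)$ of a twisting system is precisely what is needed to verify associativity, so $M^{\tau}$ is a graded $S^{\tau}$-module and $M\mapsto M^{\tau}$ is functorial. Using the inverse bijections $\tau_n^{-1}$ one produces a quasi-inverse, yielding an equivalence $Gr(S)\simeq Gr(S^{\tau})\cong Gr(R)$. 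Since twisting never alters underlying graded vector spaces, this functor manifestly preserves the shifts, a feature I would reuse in the passage to $(3)$.

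The hard direction is $(2)\Rightarrow(1)$, and this is where I expect the main obstacle. Starting from an equivalence $F\colon Gr(R)\to Gr(S)$, I would first argue that $F(R)$ is a finitely generated indecomposable graded projective object (equivalences preserve compactness and $\operatorname{End}_{Gr(R)}(R)=\mathbb{K}$ is local), hence isomorphic to a shift $S(d)$ by graded Nakayama; after renormalising we may take $F(R)\cong S$. The difficulty is that $F$ need not commute with $s$, so each $F(R(n))$ is only \emph{some} shift $S(d_n)$, and it is the assignment $n\mapsto d_n$ together with the chosen isomorphisms $F(R(n))\cong S(d_n)$ that must be distilled into a twisting system. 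Concretely, I would compare $F\circ s$ with $s\circ F$ and read off from the resulting natural isomorphisms a family $\tau=\{\tau_n\}$ of degree-preserving bijections of $S$; the verification that these satisfy Zhang's cocycle-type identity is the crux, amounting to coherence of the chosen isomorphisms under the composition $\operatorname{Hom}(R,R(m))\times\operatorname{Hom}(R(m),R(m+n))\to\operatorname{Hom}(R,R(m+n))$. Full faithfulness of $F$ then identifies $\bigoplus_n\operatorname{Hom}_{Gr(S)}(F(R),F(R(n)))$ with $R$ on one side and with the twist $S^{\tau}$ on the other.

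Finally, for the interplay of $(2)$ and $(3)$ I would invoke the Serre-type correspondence of Artin and Zhang. The quotient functor $\pi\colon Gr\to Tails$ carries the equivalence of $(2)$ to an equivalence of $Tails$ preserving the $\pi R(n)$, giving $(2)\Rightarrow(3)$, since the twisting functor of $(1)\Rightarrow(2)$ fixes underlying vector spaces, hence preserves torsion modules and descends. For $(3)\Rightarrow(1)$ I would recover the algebras from the quotient categories via the section functor, using $R\cong\bigoplus_{n\geq0}\operatorname{Hom}_{Tails(R)}(\pi R,\pi R(n))$, where the hypotheses that $R$ is Noetherian and $R_1\neq0$ guarantee this reconstruction; an equivalence $\Phi$ as in $(3)$ then matches the reconstructed algebras, and the argument of $(2)\Rightarrow(1)$ applies verbatim to produce the twisting system. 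The role of $R_1\neq0$ throughout is to rule out degenerate cases in which shifts collapse and the reconstruction fails.
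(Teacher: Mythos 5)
A preliminary remark: the paper offers no proof of this theorem at all --- it is quoted as a known result from Zhang \cite{Zh} --- so your attempt has to stand on its own merits, and it does not. The fatal problem is your direction $(3)\Rightarrow(1)$. The reconstruction you invoke, $R\cong\bigoplus_{n\geq0}\operatorname{Hom}_{Tails(R)}(\pi R,\pi R(n))$, is \emph{not} guaranteed by ``$R$ Noetherian and $R_1\neq0$''. By Artin--Zhang \cite{AZ}, the section ring computes only a saturation of $R$, agrees with $R$ at best in large degrees, and even that requires the condition $\chi_1$, which appears nowhere in the hypotheses. Concretely, take $S=\mathbb{K}[x,y]/(xy,y^2)$: it is connected, commutative Noetherian, with $S_1\neq0$, but the ideal $(y)$ is finite-dimensional, hence torsion, so $\pi S\cong\pi\,\mathbb{K}[x]$ and the section ring is $\mathbb{K}[x]\not\cong S$. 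Worse, the same example kills the implication itself, not merely your proof of it: restriction of scalars along $S\twoheadrightarrow S/(y)\cong\mathbb{K}[x]$ induces an equivalence $Tails(\mathbb{K}[x])\to Tails(S)$ carrying $\pi\,\mathbb{K}[x](n)$ to $\pi S(n)$ for every $n$ (it even commutes with the shift functors), yet $Gr(\mathbb{K}[x])\not\simeq Gr(S)$, since graded equivalences preserve simples and Yoneda Ext, every $\operatorname{Ext}^1$ between simples of $Gr(\mathbb{K}[x])$ has dimension at most $1$, while $\operatorname{Ext}^1_{Gr(S)}(\mathbb{K},\mathbb{K}(-1))\cong\mathbb{K}^2$ because $S_+$ needs two degree-one generators. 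So under the hypotheses as literally stated, no proof of $(3)\Rightarrow(1)$ can exist; the honest statement needs extra conditions of $\chi$/saturation type (which the algebras $S^n_\omega$ of the paper do satisfy, being AS-regular, so the application is unharmed), and any proof must use them.

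Independently of this, your treatment of the genuinely hard direction $(2)\Rightarrow(1)$ is a plan rather than a proof, and its central device is off. You propose to ``compare $F\circ s$ with $s\circ F$ and read off from the resulting natural isomorphisms'' the maps $\tau_n$. But $F\circ s$ and $s\circ F$ are not naturally isomorphic in general; if they were, $F$ would commute with all shifts and one would obtain a graded algebra isomorphism $R\cong S$, with no twist needed --- the twisting system exists precisely to measure the absence of such a natural isomorphism. What is actually required is: (i) pinning down $d_n=n$ after normalisation, which is where $R_1\neq0$ really enters (not merely ``to rule out degenerate cases''): $\operatorname{Hom}_{Gr(S)}(S(a),S(b))\cong S_{b-a}$ vanishes for $b<a$ since $S$ is $\mathbb{N}_0$-graded, so $R_1\neq 0$ forces $d_{n+1}>d_n$, and essential surjectivity of $F$ forces $n\mapsto d_n$ to be a bijection of $\mathbb{Z}$, hence a translation; and (ii) fixing isomorphisms $\theta_n\colon F(R(n))\to S(n)$, transporting the composition maps $\operatorname{Hom}(R(a),R(b))\otimes\operatorname{Hom}(R(b),R(c))\to\operatorname{Hom}(R(a),R(c))$ through $F$ and the $\theta_n$, and verifying that the resulting degree-preserving bijections of $S$ satisfy Zhang's identity $\tau_n(y\tau_m(z))=\tau_n(y)\tau_{n+m}(z)$. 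That verification \emph{is} the theorem; you name it as ``the crux'' and then leave it undone. A smaller slip of the same nature occurs in $(1)\Rightarrow(2)$: shift preservation is not ``manifest'', because $(M(n))^\tau$ and $(M^\tau)(n)$ carry different module structures; one needs the twisting system extended to $\mathbb{Z}$-indices, the isomorphism between the two being induced by $\tau_{-n}$.
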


The first natural step towards our target is, however, to classify these algebras up to isomorphism.

\begin{lemma}\label{isomorphism}
$S_{\omega}$ is isomorphic as a graded algebra to $S_{\omega'}$ if and only if there is $\sigma\in \Sigma_n$ such that $\omega'_{\sigma(i)\sigma(j)}=\omega_{ij}$, for all $i,j\in\left\{1,...,n\right\}$.
\end{lemma}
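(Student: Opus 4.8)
The plan is to prove both implications by working with the degree-one components, which a graded isomorphism must preserve, and to reduce the relations of the two algebras to a comparison of scalars in the PBW basis.

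For the backward implication, suppose $\sigma\in\Sigma_n$ satisfies $\omega'_{\sigma(i)\sigma(j)}=\omega_{ij}$ for all $i,j$. Writing $Y_1,\dots,Y_n$ for the generators of $S_{\omega'}$, I would define $\phi$ on generators by $\phi(X_i)=Y_{\sigma(i)}$ and extend multiplicatively. To see that $\phi$ is well defined it suffices to check that each defining relation of $S_\omega$ is sent to zero: the image of $X_jX_i-\omega_{ij}X_iX_j$ is $Y_{\sigma(j)}Y_{\sigma(i)}-\omega_{ij}Y_{\sigma(i)}Y_{\sigma(j)}$, and substituting the relation $Y_{\sigma(j)}Y_{\sigma(i)}=\omega'_{\sigma(i)\sigma(j)}Y_{\sigma(i)}Y_{\sigma(j)}=\omega_{ij}Y_{\sigma(i)}Y_{\sigma(j)}$ of $S_{\omega'}$ shows that it vanishes. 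Since $\sigma$ is a bijection, the assignment $Y_{\sigma(i)}\mapsto X_i$ provides a two-sided inverse, so $\phi$ is a graded algebra isomorphism.

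For the forward implication, let $\phi\colon S_\omega\to S_{\omega'}$ be a graded isomorphism. As $\phi$ preserves degree and both algebras have $n$-dimensional degree-one part, $\phi$ restricts to a linear isomorphism there; write $\phi(X_i)=\sum_k a_{ki}Y_k$ with $A=(a_{ki})\in GL_n(\mathbb{K})$. Applying $\phi$ to $X_jX_i=\omega_{ij}X_iX_j$ and expanding both sides in the basis $\{Y_kY_l:k\le l\}$ of $(S_{\omega'})_2$ (using $Y_lY_k=\omega'_{kl}Y_kY_l$ for $k<l$), I would compare coefficients. The coefficient of $Y_k^2$ gives
\begin{equation}\nonumber
(1-\omega_{ij})\,a_{ki}a_{kj}=0\qquad(\text{all }k,\ i\neq j),
\end{equation}
and the coefficient of $Y_kY_l$ with $k<l$ gives
\begin{equation}\nonumber
a_{li}a_{kj}\,(1-\omega_{ij}\omega'_{kl})+a_{ki}a_{lj}\,(\omega'_{kl}-\omega_{ij})=0\qquad(k<l,\ i\neq j).
\end{equation}
The first relation is the crux. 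In the nondegenerate case $\omega_{ij}\neq1$ for all $i\neq j$, it forces each row of $A$ to have at most one nonzero entry; invertibility of $A$ then makes it a monomial matrix, so $\phi(X_i)=c_iY_{\rho(i)}$ for a bijection $\rho$ and scalars $c_i\in\mathbb{K}^*$. Substituting into the original relation gives $Y_{\rho(j)}Y_{\rho(i)}=\omega_{ij}Y_{\rho(i)}Y_{\rho(j)}$, and matching this with the defining relation $Y_{\rho(j)}Y_{\rho(i)}=\omega'_{\rho(i)\rho(j)}Y_{\rho(i)}Y_{\rho(j)}$ of $S_{\omega'}$ (legitimate since $Y_{\rho(i)}Y_{\rho(j)}$ is a basis vector for $\rho(i)\neq\rho(j)$) yields $\omega'_{\rho(i)\rho(j)}=\omega_{ij}$, so $\sigma=\rho$ works.

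The main obstacle is the degenerate case, where some $\omega_{ij}=1$ and the first relation no longer forces $A$ to be monomial: commuting variables may genuinely be mixed. To deal with this I would first record two families of automorphisms of $S_\omega$ that fix $\omega$, namely the diagonal rescalings $X_i\mapsto c_iX_i$ and, for any block $B$ of indices that pairwise commute and share the same commutation scalar with every outside variable (i.e.\ $\omega_{ij}=1$ and $\omega_{im}=\omega_{jm}$ for $i,j\in B$, $m\notin B$), the invertible linear substitutions in $\{X_i:i\in B\}$. Using the two displayed relations together with the invertibility of $A$, I would show that the support pattern of $A$ is block-diagonal for such a partition on both sides, i.e.\ that $\phi$ carries each block of $S_\omega$ isomorphically onto a block of $S_{\omega'}$ of the same size and the same external commutation data. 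Post-composing $\phi$ with a block-mixing automorphism then reduces $A$ to a monomial matrix, and taking $\sigma$ to be any bijection respecting the resulting block correspondence gives $\omega'_{\sigma(i)\sigma(j)}=\omega_{ij}$, which finishes the argument. I expect extracting this block-diagonality of the support pattern from the two scalar relations to be the most delicate piece of bookkeeping.
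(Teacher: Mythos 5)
Your proposal is correct, and on most of the statement it runs parallel to the paper's own proof: your backward direction is the paper's construction of the monomial isomorphism $\tilde{\Psi}$, your two coefficient equations are exactly the paper's equations (\ref{equation1}) and (\ref{equation2}) (rearranged), and your treatment of the case $\omega_{ij}\neq 1$ for all $i\neq j$ --- each row of $A$ has at most one nonzero entry, invertibility makes $A$ monomial, substitution yields $\omega'_{\rho(i)\rho(j)}=\omega_{ij}$ --- is the paper's argument verbatim. The genuine divergence is in the degenerate case. The paper keeps the original matrix and \emph{deletes} entries: for each pair $i<j$ with $\omega_{ij}=1$ it selects a nonzero $2\times 2$ minor (which exists by invertibility, and which via equation (\ref{equation2}) forces $\omega'_{kl}=1$ on the chosen rows), and asserts these choices can be made consistently so that the surviving entries form an invertible matrix $C'$ with at most one nonzero entry per row, which then realizes the permutation. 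You instead organize commuting variables into blocks with equal external scalars, note that invertible substitutions inside a block are automorphisms, and kill the mixing by post-composition. Both routes leave the same combinatorial crux at a similar level of detail --- the paper's ``choices that do not coincide'' is asserted rather than proved, and your block-diagonality of the support of $A$ is deferred as bookkeeping --- so the useful thing to record is that your deferred claim does close, and quickly, from your two displays plus invertibility. If $a_{ki},a_{li}\neq 0$ with $\omega'_{kl}\neq 1$, then for every $j$ with $\omega_{ij}\neq 1$ your first display gives $a_{kj}=a_{lj}=0$, while for every $j$ with $\omega_{ij}=1$ your second display degenerates to $(1-\omega'_{kl})(a_{li}a_{kj}-a_{ki}a_{lj})=0$, so each such minor vanishes; together these make row $l$ a scalar multiple of row $k$, contradicting invertibility. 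Dually, your first display already handles columns sharing a row, and the cross-block scalars come for free: when $a_{kj}=a_{li}=0$ but $a_{ki},a_{lj}\neq 0$, the second display collapses to $a_{ki}a_{lj}(\omega'_{kl}-\omega_{ij})=0$, i.e.\ $\omega'_{kl}=\omega_{ij}$, which is exactly what makes ``any bijection respecting the block correspondence'' work. In short: your route is longer but more structural and fully repairable with the computations above; the paper's entry-deletion route is shorter, but its truncation-to-$C'$ step is the more delicate unproved assertion.
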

\begin{proof}

Let $\Psi$ be a graded isomorphism from $S_{\omega}$ to $S_{\omega'}$ and let $C$ be the corresponding element in $GL_n(\mathbb{C})$ such that $\Psi(X_j)=\sum_i c_{ij}X_i$. The matrix $C$ induces a graded endomorphism $\Phi$ of the free algebra $F$ in $n$ variables, thus making the following diagram commute:
\begin{equation}\nonumber
\xymatrix{F\ar[d]_{p_{\omega}}\ar[r]^{\Phi}&F\ar[d]^{p_{\omega'}} \\ F/I_{\omega}=S_{\omega}\ar[r]_{\Psi}& S_{\omega'}=F/I_{\omega'}}.
\end{equation}

Thus, we have $\Phi(I_{\omega})=I_{\omega'}$. Given that $\Phi$ is graded, the images of the standard generators of $I_{\omega}$ are of degree two and thus they are linear combinations of the standard generators of $I_{\omega'}$. In particular the coefficients of the image of such a relation at $X_k^2$ have to be zero, for all $k$. From similar simple observations, we get the following equations:
\begin{equation}\label{equation1}
c_{ki}=0 \vee c_{kj}=0 \vee \omega_{ij}=1,\ \forall 1\leq i<j\leq n,\ \forall 1\leq k \leq n
\end{equation}
\begin{equation}\label{equation2}
c_{kj}c_{li}-\omega_{ij}c_{ki}c_{lj}=\omega'_{kl}(\omega_{ij}c_{li}c_{kj}-c_{lj}c_{ki}).
\end{equation}

Suppose that $\omega_{ij}\neq 1$, for all $1\leq i<j\leq n$. Then for any fixed row, equations (\ref{equation1}) guarantee that given any two entries, one of them is necessarily zero. Thus in each row there is a unique nonzero entry and, since the matrix is invertible, the same applies to columns. Thus, there is $\sigma\in \Sigma_n$ such that $\Phi(X_i)=c_{\sigma(i)i}X_{\sigma(i)}$. So we get:
\begin{equation}\nonumber
\Phi(X_jX_i - \omega_{ij}X_iX_j)=c_{\sigma(i)i}c_{\sigma(j)j}(X_{\sigma(j)}X_{\sigma(i)}-\omega_{ij}X_{\sigma(i)}X_{\sigma(j)})
\end{equation}
and therefore $\omega'_{\sigma(i)\sigma(j)}=\omega_{ij}$, since $\Phi$ is linear.

If some of the parameters $\omega_{ij}$ are 1, the matrix $C$ might not be of the form above described. Note however that, for $\omega_{ij}=1$, equation (\ref{equation2}) is the same as $M_{k,l,i,j}=\omega'_{kl}M_{k,l,i,j}$, where $M_{k,l,i,j}$ stands for the $2 \times 2$ minor given by rows $k,l$ and columns $i,j$. As $C$ is invertible, for each $i,j$ such that $\omega_{ij}=1$, there is $k,l$ such that $M_{k,l,i,j}\neq 0$. Moreover, the nonsingularity of $C$ assures that we can make these choices so that they do not coincide for distinct pairs $i,j$ with $\omega_{ij}=1$.

This means that we can create a new matrix $C'$ by deleting some entries of $C$ (i.e., setting them to be zero), leaving the chosen nonzero $2\times 2$ minors, in such a way that $C'\in GL_n(\mathbb{C})$ and satisfies:
\begin{equation}\nonumber
c'_{ki}=0 \vee c'_{kj}=0,\ \forall 1\leq i<j\leq n,\ \forall 1\leq k \leq n.
\end{equation}
Hence, equations (\ref{equation1}) and (\ref{equation2}) hold for $C'$ and thus $C'$ provides an isomorphism $\tilde{\Psi}$ from $S_{\omega}$ to $S_{\omega'}$ of the form $\tilde{\Psi}(X_i)=c'_{\sigma(i)i}X_{\sigma(i)}$, as above. This proves that $\omega'_{\sigma(i)\sigma(j)}=\omega_{ij}$. The converse follows from the construction of $\tilde{\Psi}$.

\end{proof}

The following definition will prove useful and, later, natural. 

\begin{definition}
Given $\tau\in \Sigma_n$ a k-cycle, $k\leq n$, define the $\tau$-cyclic $q$-number by $q_{\tau}(\omega):= \prod_{i=1}^k \omega_{\tau^{i-1}(v)\tau^i(v)}$, for any $1\leq v\leq n$ which is not fixed by $\tau$.
\end{definition}

It is clear that the definition does not depend on the choice of $v$.

\begin{theorem}\label{graded equivalence}
The following conditions are equivalent:

\begin{enumerate}
\item $\exists\sigma\in \Sigma_n,\ \exists (m_1,...,m_n)\in\mathbb{C}^{*n}: \ \omega'_{\sigma(i)\sigma(j)}= m_im_j^{-1} \omega_{ij}$;
\item $Gr(S_{\omega}) \cong Gr(S_{\omega'})$;
\item There is an equivalence between $Tails(S_{\omega})$ and $Tails(S_{\omega'})$ preserving the shifts of the structure sheaf;
\item $B_{\omega} \cong B_{\omega'}$;
\item $\exists\sigma\in \Sigma_n: \forall \tau\in \Sigma_n$, $\tau$ k-cycle: $q_{\tau}(\omega')=q_{\sigma^{-1}\tau\sigma}(\omega)$;
\item $Mod(B_{\omega})\cong Mod(B_{\omega'})$.

\end{enumerate}
\end{theorem}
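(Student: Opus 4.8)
The plan is to verify the six conditions by running a cycle of implications $(1)\Rightarrow(2)\Rightarrow(3)\Rightarrow(4)\Rightarrow(1)$ together with the two appended equivalences $(4)\Leftrightarrow(6)$ and $(1)\Leftrightarrow(5)$. The equivalence $(2)\Leftrightarrow(3)$ is immediate: it is precisely the content of Theorem \ref{Zhang twist} applied to $R=S_\omega$ and $S=S_{\omega'}$, both connected $\mathbb{N}_0$-graded right Noetherian with $(S_\omega)_1\neq 0$. So along this portion of the cycle the only real work is to connect condition $(1)$ with the graded/derived side, and then with the combinatorial invariant $(5)$.

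For $(1)\Rightarrow(2)$ I would exhibit an explicit twist. Given $(m_1,\dots,m_n)\in\mathbb{C}^{*n}$, the diagonal map $\phi(X_i)=m_iX_i$ is a graded automorphism of $S_\omega$ (it preserves each relation $X_jX_i-\omega_{ij}X_iX_j$), so $\tau_k:=\phi^k$ is an algebraic twisting system. Writing $\bullet$ for the twisted product, a short computation on generators gives
\[
X_i\bullet X_j=X_i\tau_1(X_j)=m_jX_iX_j,\qquad X_j\bullet X_i=m_i\omega_{ij}X_iX_j,
\]
so $X_j\bullet X_i=m_im_j^{-1}\omega_{ij}\,(X_i\bullet X_j)$ and the twist $S_\omega^{\tau}$ is exactly $S_{\tilde\omega}$ with $\tilde\omega_{ij}=m_im_j^{-1}\omega_{ij}$. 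Condition $(1)$ reads $\omega'_{\sigma(i)\sigma(j)}=\tilde\omega_{ij}$, so by Lemma \ref{isomorphism} (with the permutation $\sigma$) we get $S_{\omega'}\cong S_{\tilde\omega}=S_\omega^{\tau}$ as graded algebras; thus $S_{\omega'}$ is isomorphic to a twist of $S_\omega$ and Theorem \ref{Zhang twist} yields $(2)$.

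The passage to the quiver algebras is where the bookkeeping lives. For $(3)\Rightarrow(4)$, an equivalence $\Phi$ preserving the shifts of the structure sheaf restricts to the Noetherian objects and sends the exceptional collection $(O_\omega,\dots,O_\omega(n-1))$ to $(O_{\omega'},\dots,O_{\omega'}(n-1))$, hence induces isomorphisms of the $\mathrm{Hom}$-spaces compatible with composition, i.e. a ring isomorphism $B_\omega=\mathrm{End}(\bigoplus_iO_\omega(i))\cong\mathrm{End}(\bigoplus_iO_{\omega'}(i))=B_{\omega'}$, which is $(4)$. The equivalence $(4)\Leftrightarrow(6)$ is formal in the easy direction (isomorphic algebras are Morita equivalent) and, in the reverse direction, uses that $B_\omega$ and $B_{\omega'}$ are finite-dimensional \emph{basic} algebras --- path algebras of the acyclic Beilinson quiver modulo an admissible ideal --- so that a Morita equivalence between them is automatically an isomorphism.

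The remaining implications $(4)\Rightarrow(1)$ and $(1)\Leftrightarrow(5)$ carry the real content. For $(4)\Rightarrow(1)$ I would argue as in Lemma \ref{isomorphism}, but on the level of the path algebra: an isomorphism $B_\omega\cong B_{\omega'}$ permutes the primitive idempotents and so induces an automorphism of the underlying directed Beilinson quiver; since vertex $1$ is its unique source and vertex $n$ its unique sink, every vertex is fixed, and the isomorphism acts on each arrow-space $\langle\alpha_1^k,\dots,\alpha_n^k\rangle$ by an invertible linear map. The relations $\alpha_j^k\alpha_i^{k-1}=\omega_{ij}\alpha_i^k\alpha_j^{k-1}$ then force these maps to be monomial with a common underlying permutation $\sigma$ --- by the same reasoning that produced equations (\ref{equation1})--(\ref{equation2}) --- and comparing the scalars across the layers $k$ telescopes them into a coboundary $m_im_j^{-1}$, giving $(1)$. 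Finally, $(1)\Leftrightarrow(5)$ is a cocycle computation: for a fixed $\sigma$ set $\beta_{ij}:=\omega'_{\sigma(i)\sigma(j)}\omega_{ij}^{-1}$ (so $\beta_{ij}\beta_{ji}=1$); condition $(1)$ says $\beta$ is the coboundary $m_im_j^{-1}$, and since the complete graph on $\{1,\dots,n\}$ is connected this holds if and only if every cyclic product $\beta_{i_1i_2}\cdots\beta_{i_ki_1}$ is trivial. Writing the cyclic product along a $k$-cycle $\tau'$ as $q_{\sigma\tau'\sigma^{-1}}(\omega')\,q_{\tau'}(\omega)^{-1}$ converts this into $q_{\sigma\tau'\sigma^{-1}}(\omega')=q_{\tau'}(\omega)$ for all cycles, which is exactly $(5)$ after the reindexing $\tau=\sigma\tau'\sigma^{-1}$. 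I expect the monomial-plus-telescoping step in $(4)\Rightarrow(1)$ to be the main obstacle, since it is where the quiver relations and the arrow permutations must be controlled simultaneously across all layers $k$; the generic case $\omega_{ij}\neq1$ is clean, and the degenerate parameters will require the same minor-juggling device used at the end of Lemma \ref{isomorphism}.
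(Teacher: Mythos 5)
Your overall architecture --- the cycle $(1)\Rightarrow(2)\Rightarrow(3)\Rightarrow(4)\Rightarrow(1)$ together with $(4)\Leftrightarrow(6)$ and $(1)\Leftrightarrow(5)$ --- is legitimate, and several pieces are correct and even tidier than the paper's: $(1)\Rightarrow(2)$ is exactly the paper's twist construction; $(2)\Leftrightarrow(3)$ is Theorem \ref{Zhang twist}; your $(3)\Rightarrow(4)$ is correct and, inside your cycle, would let you bypass the Bondal--Orlov ample-sequence machinery that the paper needs for its implication $(4)\Rightarrow(3)$; $(4)\Leftrightarrow(6)$ is the paper's basic-algebra argument; and your coboundary/cyclic-product formulation of $(1)\Leftrightarrow(5)$ is a clean repackaging of the paper's explicit construction of the $\lambda_{ij}$. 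The genuine gap is in $(4)\Rightarrow(1)$, which in your scheme is the step carrying all the weight. The paper never proves this implication directly: it closes the loop through the long proof of $(2)\Rightarrow(1)$ via twisting systems, and your sketch of a direct quiver-theoretic substitute fails at its key claim.

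Here is the failure, concretely. An isomorphism $B_\omega\cong B_{\omega'}$ does give, after adjusting by an inner automorphism so that the vertex idempotents match, invertible matrices $C^k$ on the arrow spaces such that $C^{k}\otimes C^{k-1}$ carries the relation space of layer $k$ into the corresponding relation space for $\omega'$. But now the two tensor factors carry \emph{different} matrices, and the reasoning behind equations (\ref{equation1})--(\ref{equation2}) collapses. In Lemma \ref{isomorphism} the vanishing of the coefficient at $X_a^2$ reads $c_{ai}c_{aj}(1-\omega_{ij})=0$, which forces a zero entry whenever $\omega_{ij}\neq1$; in your setting it reads $C^k_{aj}C^{k-1}_{ai}=\omega_{ij}\,C^k_{ai}C^{k-1}_{aj}$, which forces nothing to vanish: for example the rows $(1,1,0,\dots,0)$ in $C^k$ and $(\omega_{12},1,0,\dots,0)$ in $C^{k-1}$ satisfy all of these constraints for arbitrary $\omega_{12}$. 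So monomiality is \emph{not} forced even when every $\omega_{ij}\neq 1$ (the case you call clean), and the minor-juggling device of Lemma \ref{isomorphism} does not apply either, since it was designed for the symmetric situation $C\otimes C$. Non-monomial solutions genuinely exist: they correspond to Zhang twists by non-diagonal matrices, e.g.\ by the unipotent part of (\ref{Jordan}), and the paper's own $n=2$ discussion exhibits exactly such an example. For those solutions condition (1) is still true, but proving it is precisely the hard content of the paper's $(2)\Rightarrow(1)$ argument: reduction of the twist to Jordan form, the count of degree-one normal elements for non-repetitive parameter sets, polynormal sequences, and induction on $n$. In short, your cycle does not avoid the paper's hard step, it relocates it: both $(2)\Rightarrow(1)$ and your $(4)\Rightarrow(1)$ reduce to showing that $(A\otimes B)(R_\omega)=R_{\omega'}$ for invertible $A,B$ implies (1), and the argument you offer for that reduction is not valid. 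To repair the proof you would need to import the paper's normal-element/induction analysis at this point (or find a genuinely new argument for it); everything else in your proposal then goes through.
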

\begin{remark}\label{length3}
Note that it is sufficient to check condition (5) for cycles of length 3. In fact, if $k<n$, $\tau=(a_1a_2...a_k)$, and $b$ is fixed by $\tau$, then 
\begin{equation}\nonumber
q_{\tau}(\omega)=(\prod_{i=1}^k q_{(ba_ia_{i+1})})q_{(ba_ka_1)}.
\end{equation}
If $\tau$ is of maximal length (i.e., $k=n$) we can write
\begin{equation}\nonumber
q_{\tau}(\omega)=q_{(a_1...a_{n-1})}(\omega)q_{(a_1a_{n-1}a_{n})}(\omega)
\end{equation} 
and then repeat the previous step to write $q_{(a_1...a_{n-1})}(\omega)$ (and thus $q_{\tau}(\omega)$) as a product of $q$-numbers of length 3. This argument shows that we only need the information provided by the cyclic $q$-numbers of length 3, $q_{(abc)}(\omega)$, with fixed $a$.
\end{remark}

Let us now prove the proposition.

\begin{proof}
Theorem \ref{Zhang twist} tells us that (2)$\Leftrightarrow$(3). We shall prove (1)$\Leftrightarrow$(2), (1)$\Leftrightarrow$(5), (1)$\Rightarrow$(4), (4)$\Rightarrow$(3) and (4)$\Leftrightarrow$(6).

Let us start with (1)$\Rightarrow$ (2). Suppose that (1) holds. Let $f$ be the algebra automorphism of $S_{\omega}$ defined by $f(X_i)=m_iX_i$, for all $1\leq i \leq n$. Consider the twisting system $\left\{f^n: n\in\mathbb{Z}\right\}$. The twisted algebra is such that $X_i\bullet X_j = m_jX_iX_j$ and $X_j\bullet X_i=m_iX_jX_i$. Since $X_jX_i = \omega_{ij}X_iX_j$ we get:
\begin{equation}\nonumber
X_j\bullet X_i = m_i\omega_{ij}X_iX_j=m_im_j^{-1}\omega_{ij}X_i\bullet X_j = \tilde{\omega}_{ij}X_i\bullet X_j
\end{equation}
and thus the twisted algebra is just $S_{\tilde{\omega}}$ which, by the previous lemma, is isomorphic to $S_{\omega'}$. By Zhang's theorem \ref{Zhang twist} we have (2).

To prove the converse, (2)$\Rightarrow$ (1), we start by observing that, since $S^n_{\omega}$ is generated in degree 1, for a twisting system $\tau$ we have
\begin{equation}\label{gen twist}
(S^n_{\omega})^{\tau}=\mathbb{C}\left\langle X_1,...,X_n\right\rangle/\left\langle X_j\tau_1^{-1}(X_i)-\omega_{ij}X_i\tau_1^{-1}(X_j),\ 1\leq i<j \leq n\right\rangle
\end{equation}
and thus the twisted algebra is completely determined by how $\tau_1$ acts in the degree 1 component of $S_{\omega}^n$, $(S_{\omega}^n)_1$.  Let $\tau_1$ act in $(S_{\omega}^n)_1$ by a matrix $C\in GL_n(\mathbb{C})$. We observe that the twisted algebra induced by any conjugate of $C$ in $GL_n(\mathbb{C})$ is isomorphic to the twisted algebra induced by $C$. Indeed, given $P=(p_{ij})_{1\leq i,j\leq n} \in GL_n(\mathbb{C})$ let $Y_1,...,Y_n$ be the basis of $(S_{\omega}^n)_1$ such that $Y_i=\sum_{j=1}^np_{ij}X_j$ and let $A$ be an algebra such that $P$ induces an algebra isomorphism between $S_{\omega}^n$ and $A$ ($A$ can be presented as the algebra generated by $Y_1,...,Y_n$ with relations obtained by rewriting the each $X_i$ in the standard relations of $S_{\omega}^n$ in the new basis). Now let $\tilde{\tau}_1$ be such that its action in $A_1$ is determined by $PCP^{-1}$. Thus, as linear maps on $A_1\cong (S_{\omega})_1$, $\tau_1$ and $\tilde{\tau}_1$ coincide. Hence, if $\tilde{\tau}$ is a twisting system of $A$ containing $\tilde{\tau}_1$ we have $A^{\tilde{\tau}}\cong S_{\omega}^{\tau}$. But since $S_{\omega}^n$ and $A$ are isomorphic as graded algebras, we also have $S_{\omega}^{\tau}\cong S_{\omega}^{\tilde{\tau}}$. Hence, we may assume without loss of generality that $C$ is in Jordan normal form, i.e., 

\begin{equation}\label{Jordan}
C=\left(\begin{array}{ccccc}m_1 & 0 & 0 &... & 0 \\ \delta_1 & m_2 & 0 & ... & 0\\ 0 & \delta_2 & m_3 & ... & 0 \\ ...& ... & ...& ... &... \\ 0& ... & 0 & \delta_{n-1} & m_n \end{array}\right),
\end{equation}
where $m_1,...,m_n \in \mathbb{C}^*$ and $\delta_1,...,\delta_{n-1}\in\left\{0,1\right\}$.
Note also that, for a matrix of this form, $\tau_1^{-1}(X_i)$ is a linear combination of $X_1,...,X_{i-1}$.

We say that the set of parameters $\omega$ is non-repetitive if $\omega_{ij} \neq \omega_{kl}$ for any $1\leq i < j \leq n$ and $1\leq k < l \leq n$. It is easy to see that for a non-repetitive set of parameters, the only normal elements (i.e., elements $x$ of $S_{\omega}$ such that $xS_{\omega}=S_{\omega}x$) of degree 1 are scalar multiples of $X_1,X_2,...,X_n$. 
For that purpose we recall the following result from \cite{QFD}.
\begin{lemma}[Launois, Lenagan, Rigal,\cite{QFD}]
Let $R$ be a prime noetherian ring and suppose that $d$, $s$ are normal elements of $R$ such that $dR$ is prime and $s\notin dR$. Then, there is a unit $v\in R$ such that $sd = vds.$
\end{lemma}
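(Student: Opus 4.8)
The plan is to pass to the classical (Goldie) ring of fractions $Q=Q_{\mathrm{cl}}(R)$ of the prime Noetherian ring $R$. First I would dispose of the degenerate cases: if $d=0$ then $dR=0$ is prime (so $R$ is a domain) and $sd=0=vds$ trivially, so I may assume $d\neq0$, and $s\neq0$ since $s\notin dR$. In a prime ring a nonzero normal element is regular: if $dr=0$ then $dRr=(Rd)r=R(dr)=0$, so $r=0$ by primeness, and symmetrically on the other side. Hence $d$ and $s$ are non-zero-divisors and become units in $Q$. Working inside $Q$ and setting $v:=sds^{-1}d^{-1}$, the identity $vds=sd$ holds automatically; the whole content of the lemma is therefore that this specific $v$ lies in $R$ and is invertible there.

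Because $s$ is normal we have $sR=Rs$, so $\tau(r):=srs^{-1}$ is a well-defined automorphism of $R$ (with $sr=\tau(r)s$), and $v=\tau(d)d^{-1}$. I would then reduce the lemma to the single statement $\tau(dR)=dR$. Indeed, $\tau$ being an automorphism gives $\tau(dR)=\tau(d)R$, so $\tau(dR)=dR$ says exactly that $\tau(d)$ and $d$ generate the same right ideal; as both are regular, writing $\tau(d)=dr_{1}$ and $d=\tau(d)r_{2}$ forces $r_{1}r_{2}=r_{2}r_{1}=1$, so $r_{1}\in R^{\times}$, and then $v=\tau(d)d^{-1}=dr_{1}d^{-1}$ is the image of a unit under conjugation by the normal element $d$, hence a unit of $R$. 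Thus everything comes down to showing that the automorphism $\tau$ fixes the prime ideal $P:=dR$.

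This invariance $\tau(P)=P$ is the crux, and the step I expect to be the main obstacle, since a generic automorphism only permutes prime ideals; the hypotheses that $P$ is prime and that $s\notin P$ are precisely what force $\tau$ to fix $P$ rather than move it. My plan is to localise at the powers of $s$. Since $s$ is normal and regular, $\mathcal{S}=\{s^{n}:n\geq0\}$ is an Ore set (from $as^{n}\in Rs^{n}=s^{n}R$), so $R_{\mathcal{S}}=R[s^{-1}]$ exists, and in it $s$ is a unit, so the extension of $\tau$ is the inner automorphism by $s$ and therefore fixes every two-sided ideal; in particular $\tau(PR_{\mathcal{S}})=PR_{\mathcal{S}}$. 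It then remains to contract: I claim $PR_{\mathcal{S}}\cap R=P$. This is where primeness re-enters, for if $rs^{n}\in P$ then the product of two-sided ideals $(RrR)(Rs^{n}R)$ lies in $P$ (using $Rs^{n}=s^{n}R$ to absorb the intermediate factors), and since $P$ is prime while no power of $s$ lies in $P$, the factor $Rs^{n}R$ is not contained in $P$, forcing $r\in P$. Combining, $\tau(P)\subseteq PR_{\mathcal{S}}\cap R=P$, and applying the same argument to $\tau^{-1}$ (conjugation by the normal element $s^{-1}$) gives the reverse inclusion, so $\tau(P)=P$ as required.

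The only remaining points I expect to need care are formal: that $\tau$ genuinely maps $R$ to $R$ (immediate from $sR=Rs$), that Ore localisation at $\mathcal{S}$ is legitimate here and satisfies the contraction formula $PR_{\mathcal{S}}\cap R=\{r\in R:rs^{n}\in P\ \text{for some}\ n\}$, and the saturation computation above; granting these, the chain of reductions is purely formal.
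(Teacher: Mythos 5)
The paper itself gives no proof of this lemma --- it is quoted from Launois--Lenagan--Rigal \cite{QFD} --- so your argument stands or falls on its own. Most of its skeleton is sound: regularity of nonzero normal elements in a prime ring, the passage to the Goldie quotient ring, the reformulation $v=\tau(d)d^{-1}$ with $\tau(r)=srs^{-1}$, and the endgame (from $\tau(dR)=dR$, regularity gives $r_1r_2=r_2r_1=1$ and $v=dr_1d^{-1}$ is a unit of $R$) are all correct. The genuine gap is exactly at the step you yourself call the crux: the claim $\tau(PR_{\mathcal{S}})=PR_{\mathcal{S}}$. The extension $PR_{\mathcal{S}}=\{ps^{-n}:p\in P,\ n\geq 0\}$ is a priori only a \emph{right} ideal of $R_{\mathcal{S}}$, and ``inner automorphisms fix two-sided ideals'' cannot be invoked until you know it is two-sided; this is not a general fact about Ore localisation of two-sided ideals in noncommutative rings, and in your setting it is essentially equivalent to the lemma itself. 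Indeed, left absorption of $PR_{\mathcal{S}}=dR_{\mathcal{S}}$ reduces to $s^{-1}d\in dR_{\mathcal{S}}$, i.e.\ to invertibility in $R[s^{-1}]$ of the element $s':=d^{-1}sd\in R$, i.e.\ to $s^{n}\in s'R$ for some $n$; but once the lemma is known one has $s'=(d^{-1}vd)s$ with $d^{-1}vd$ a unit, and there is no independent route to this. So the step is circular. If instead you read $PR_{\mathcal{S}}$ as the honest two-sided extension $R_{\mathcal{S}}PR_{\mathcal{S}}$, the $\tau$-invariance becomes free, but your contraction computation no longer applies: that ideal contains all $s^{-a}ps^{-b}=s^{-a-b}\tau^{b}(p)$, and showing these contract into $P$ again requires knowing $\tau^{b}(P)\subseteq P$. (A minor further point: ``no power of $s$ lies in $P$'' needs the one-line induction $sRs^{n-1}=s^{n}R\subseteq P\Rightarrow s\in P$ or $s^{n-1}\in P$.)

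The invariance you need has a short direct proof from primeness which slots into your own reduction and makes the localisation unnecessary. Since $dR=Rd$ is a two-sided ideal, $sd\in dR$; hence, using $Rs=sR$,
\begin{equation*}
\tau(d)\,R\,s=\tau(d)\,s\,R=(sd)R\subseteq dR,
\end{equation*}
and since $dR$ is prime and $s\notin dR$, this forces $\tau(d)\in dR$, i.e.\ $\tau(P)\subseteq P$. Symmetrically,
\begin{equation*}
s\,R\,\tau^{-1}(d)=R\,\bigl(s\tau^{-1}(d)\bigr)=R(ds)=(dR)s\subseteq dR,
\end{equation*}
so $\tau^{-1}(d)\in dR$, i.e.\ $P\subseteq\tau(P)$. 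Hence $\tau(P)=P$ (alternatively, one inclusion plus the Noetherian chain $P\subseteq\tau^{-1}(P)\subseteq\tau^{-2}(P)\subseteq\cdots$ already gives equality), and your endgame then completes the proof.
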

It is clear that $X_k$ generates a (completely) prime ideal since the respective factor algebra is a domain (it is of the form $S^{n-1}_{\omega|}$ for some $\omega|$ a restriction of parameters). Since all invertible elements are scalars, if $\sum_{i=1}^n a_iX_i$ is normal then, by the lemma above,
\begin{equation}\nonumber
X_k\sum_{i=1}^n a_iX_i = (\sum_{i=0}^na_i\omega_{ik}X_i)X_k = \lambda_k(\sum_{i=1}^n a_iX_i)X_k
\end{equation} 
which implies that for all $a_i\neq 0$, $\omega_{ik}=\lambda_k$. Since $\omega$ is non-repetitive, all but one of the coefficients $a_i$ are equal to zero.

Note that for every $S_\omega$ there is some non-repetitive set of parameters $\tilde{\omega}$ such that $Gr(S_{\omega})\cong Gr(S_{\tilde{\omega}})$. In fact we can obtain such an algebra $S_{\tilde{\omega}}$ by choosing a suitable matrix $C$ of the form (\ref{Jordan}) with $\delta_1=...=\delta_{n-1}=0$  (i.e., a diagonal matrix) determining a twisting system. To do this, consider $G$ to be the finitely generated subgroup of $\mathbb{C}^*$ generated by $\omega_{ij}$, $1\leq i,j\leq n$. Clearly one can choose $n$ elements of $\mathbb{C}^*$, $a_1,...,a_n$, such that they generate a subgroup $H$ of $\mathbb{C}^*$ of rank $n$ intersecting $G$ trivially (if this was not the case, $G$ would necessarily be infinitely generated). Define $C$ as the diagonal matrix formed by these numbers. The twisted algebra is such that (see $(1)\Rightarrow (2)$) $\tilde{\omega}_{ij}=a_ia_j^{-1}\omega_{ij}$. If $\tilde{\omega}_{ij}=\tilde{\omega}_{kl}$ then $a_ia_j^{-1}a_ka_l^{-1}=\omega_{ij}^{-1}\omega{kl}\in G$ and since $H$ has rank $n$, $k=j$ and $l=i$ and therefore $\tilde{\omega}$ is non-repetitive.

Now if $Gr(S_{\omega}) \cong Gr(S_{\omega'})$, then $Gr(S_{\omega}) \cong Gr(S_{\tilde{\omega}})$ for some non-repetitive set of parameters $\tilde{\omega}$ defined by $\tilde{\omega}_{ij}=a_ia_j^{-1}\omega'_{ij}$ for some $a_1,...,a_n\in\mathbb{C}^*$. Therefore, if we prove that there is $\sigma\in\Sigma_n$ such that $\tilde{\omega}_{\sigma(i)\sigma(j)}=b_ib_j^{-1}\omega_{ij}$ then we also have
\begin{equation}\nonumber
\omega'_{\sigma(i)\sigma(j)}=a_{\sigma(j)}a_{\sigma(i)}^{-1}\tilde{\omega}_{\sigma(i)\sigma(j)}=a_{\sigma(j)}a_{\sigma(i)}^{-1}b_{i}b_j^{-1}\omega_{ij}
\end{equation}
which implies that, by taking $m_i=a_{\sigma(i)}^{-1}b_{i}$, we get the wanted result. It is then enough to prove the result for $S_{\omega'}$ with non-repetitive $\omega'$.

Suppose $Gr(S_{\omega}) \cong Gr(S_{\omega'})$ with non-repetitive $\omega'$. By theorem \ref{Zhang twist} $S_{\omega'}\cong (S_{\omega})^{\tau}$ for some twisting system $\tau$ and, moreover, by our argument above $\tau$ can be taken to be determined by a matrix $C$ in Jordan normal form (\ref{Jordan}). This implies, in particular, that $X_1,...,X_n$ form a polynormal sequence in $(S_{\omega})^{\tau}$, i.e., $X_1$ is normal and $X_i$ is normal in the quotient ring $(S_{\omega})^{\tau}/ \left\langle X_1,...,X_{i-1}\right\rangle$. This is due to the fact that $\tau_1^{-1}(X_i)=m_i^{-1}X_i + f_i$ where $f_i$ is a linear combination of $X_1,...,X_{i-1}$. By the observations above, since $\omega'$ is non-repetitive, $(S_{\omega})^{\tau}$ has, up to scalar multiples, exactly $n$ normal elements of degree 1. Therefore, if $\psi$ is an isomorphism from $(S^n_{\omega})^{\tau}$ to $S_{\omega'}$ there is $1\leq k\leq n$ such $\psi{X_1}=\lambda X_k$ for some $\lambda\in\mathbb{C}^*$  and thus $\psi$ establishes an isomorphism from $(S^n_{\omega})^{\tau}/X_1(S^n_{\omega})^{\tau}$ to $S^{n-1}_{\omega'|}$ where $\omega'|$ is the set of parameters naturally obtained by restriction, i.e., by forgetting $\omega_{ik}$ and $\omega_{kj}$ for all $i<k\leq n$ and $n\geq j>k$ (and thus $\omega'|$ is non-repetitive). Similarly, each factor ring $(S_{\omega})^{\tau}/ \left\langle X_1,...,X_{i-1}\right\rangle$ is isomorphic to $S^{n-i+1}_{\omega'|}$ for some suitable restriction of $\omega'$ and therefore it has, up to scalar multiples, precisely $n-i+1$ normal elements of degree 1. Since the projections of the normal elements of degree 1 of $((S_{\omega})^{\tau})_1$ in such a factor ring are normal as well (and there are $n-i+1$ of them), we conclude that these projections coincide, up to scalar multiples, with the normal elements of degree 1 in the factor ring. Therefore, in $((S_{\omega})^{\tau})_1$ the normal elements must be of the form:
\begin{equation}\label{basis}
\begin{array}{c}Y_1=X_1\\ Y_2=X_2+\lambda_{21}X_1\\ Y_3=X_3 + \lambda_{32}X_2 + \lambda_{31}X_1\\ ...\\ Y_n=X_n + \sum_{k=1}^{n-1}\lambda_{nk}X_k\end{array}.
\end{equation}
Clearly we must have $Y_jY_i=\omega'_{\sigma(i)\sigma(j)}Y_iY_j$ for some $\sigma\in\Sigma_n$ by lemma \ref{isomorphism}. 

We are now ready to prove that, if $(S_{\omega})^{\tau}\cong S_{\omega'}$ with $\tau_1$ acting in $(S_{\omega})_1$ by a matrix $C$ of the form $\ref{Jordan}$ and with $\omega'$ non-repetitive then there is $\sigma\in\Sigma_n$ such that $\omega'_{\sigma(i)\sigma(j)}=m_im_j^{-1}$, the $m_1,...,m_n$ being the elements in the diagonal of $C$. For this we use induction on the number $n$ of variables. For $n=2$ it is well-known that $S^2_{\omega}$ is a always twist of the polynomial ring in two variables given by the $\tau_1(X_1)=X_1$ and $\tau_1(X_2)=\omega_{12}^{-1}X_2$. More generally, one can twist $S^2_{\omega}$ to $S^2_{\omega'}$ by defining $\tau_1(X_1)=X_1$ and $\tau_1(X_2)= \omega'_{12}\omega_{12}^{-1}X_2$. Moreover, if the twisted algebra is determined by a non-diagonalisable matrix, an easy calculation shows that the parameter of the twisted algebra remains the same (i.e., $\omega'_{12}=mm^{-1}\omega_{12}$ where $m$ is the element appearing twice in the diagonal of the Jordan normal form). Thus the result holds for $n=2$. Suppose now the result is valid for $n-1$ and let $\psi$ be an isomorphism from $(S^n_{\omega})^{\tau}$ to $S^n_{\omega'}$ with $\tau$ and $\omega'$ are as before. Then, applying our induction hypothesis to the quotient $(S^n_{\omega})^{\tau}/\left\langle X_1\right\rangle\cong S^{n-1}_{\omega'|}$, where $\omega'|$ is a suitable restriction of $\omega'$ (forgetting $X_k=\psi(X_1)$), we conclude that there is a bijection $\tilde{\sigma}$ from $\left\{2,...,n\right\}$ to $\left\{1,...,\hat{k},...,n\right\}$ such that $\omega'_{\tilde{\sigma}(i)\tilde{\sigma}(j)}=m_im_j^{-1}\omega_{ij}$, where $m_2,...,m_n$ are the elements in the diagonal of (\ref{Jordan}). Now, consider the basis of normal elements of $(S^n_{\omega})^{\tau}$ given by  $Y_1,...,Y_n$ as in (\ref{basis}). We have
\begin{equation}\nonumber
Y_1Y_j=\omega'_{\sigma(j)\sigma(1)} Y_jY_1
\end{equation}
where $\sigma$ is determined by $\tilde{\sigma}$ such that $\sigma(1)=k$. Expanding this equation we get
\begin{equation}\nonumber
X_1(X_j + \sum_{k=1}^{j-1}\lambda_{jk}X_k)=\omega'_{\sigma(j)\sigma(1)} (X_j + \sum_{k=1}^{j-1}\lambda_{jk}X_k)X_1
\end{equation}
which by using the relations in $(S^n_{\omega})^{\tau}$ means that 
\begin{equation}\nonumber
X_1(X_j + \sum_{j=1}^{j-1}\lambda_{jk}X_k)=\omega'_{\sigma(j)\sigma(1)} m_1X_1(\omega_{1j}m_j^{-1}X_j + f_j +\sum_{k=1}^{j-1} \omega_{1k}m_k^{-1}\lambda_{jk}X_k + f_k)
\end{equation}
where each $f_k$ is a linear combination of $X_1,...,X_{k-1}$. Therefore, by linear independence of the terms, looking at the coefficients of $X_1X_n$ we conclude that 
\begin{equation}\nonumber
1=\omega'_{\sigma(j)\sigma(1)}m_1m_j^{-1}\omega_{ij}
\end{equation}
which finishes our proof.

Auroux, Katzarkov and Orlov proved (4)$\Rightarrow$(3) and we recall the main ingredients of their argument (\cite{AKO}). Suppose $B_{\omega}\cong B_{\omega'}$ via $\Phi$ and   
 consider the chain of equivalences given by
\begin{equation}\nonumber
D^b(tails(S_{\omega}))\longrightarrow D^b(mod(B_{\omega}))\longrightarrow D^b(mod(B_{\omega'}))\longrightarrow D^b(tails(S_{\omega'})).
\end{equation}
They prove that such chain takes $O_{\omega}(i)$ to $O_{\omega'}(i)$, for all $i\in \mathbb{Z}$ (this fact is clear for $0\leq i\leq n-1$ since the middle equivalence is induced from $\Phi$). Observing that this sequence is ample (\cite{AKO}), and that it is preserved by the functor, a previous result by Bondal and Orlov (\cite{BO2}, \cite{Or2}) regarding autoequivalences preserving an ample sequence yields the result. A good account of this result can also be found in Huybrechts' book (\cite{Huy}).

We will now prove (1)$\Rightarrow$(4). Let $\sigma\in \Sigma_n$ and $m_1, ..., m_n$ as in (1). Choose $a_i^1$ and $a_i^2$ such that $m_i = \frac{a_i^2}{a_i^1}$ and inductively define $a_i^k=\frac{a_i^{k-2}}{a_i^{k-1}}$. Further define $\Phi: B_{\omega}\longrightarrow B_{\omega'}$ by $\Phi(\alpha^k_l)=a_l^k\alpha^k_{\sigma(l)}$ and (1) easily implies that the ideal of relations in $B_{\omega}$ is mapped to the ideal of relations in $B_{\omega'}$, thus making $\Phi$ an isomorphism. 

It is straightforward to check that (1)$\Rightarrow$(5): in fact, following remark \ref{length3} take $\sigma$ as in (1) and, given a 3-cycle $\tau=(abc)$,
\begin{equation}\nonumber
q_{\tau}(\omega')=\omega'_{ab}\omega'_{bc}\omega'_{ca} 
\end{equation}
\begin{equation}\nonumber
=\frac{m_a}{m_{b}}\omega_{\sigma^{-1}(a)\sigma^{-1}(b)}\frac{m_{b}}{m_{c}}\omega_{\sigma^{-1}(b)\sigma^{-1}(c)}\frac{m_{c}}{m_a}\omega_{\sigma^{-1}(c)\sigma^{-1}(a)}
\end{equation}
\begin{equation}\nonumber
=\omega_{\sigma^{-1}(a)\sigma^{-1}(b)}\omega_{\sigma^{-1}(b)\sigma^{-1}(c)}\omega_{\sigma^{-1}(c)\sigma^{-1}(a)} = q_{\sigma^{-1}\tau\sigma}(\omega).
\end{equation}

Conversely, note first that the existence of a collection of $\left\{m_i\right\}_{1\leq i\leq n}$ as in (1) is equivalent to the existence of a collection $\left\{\lambda_{ij}\right\}_{1\leq i,j\leq n}$ of nonzero complex numbers such that
\begin{equation}\label{equation3}
\lambda_{ij}\lambda_{jk}=\lambda_{ik}\ \ \ {\rm and}\ \ \ \lambda_{ij}\lambda_{ji}=1
\end{equation}
where $\lambda_{ij}=m_im_j^{-1}$ (and given such collection, we can choose $m_1$ and set $m_j=m_1\lambda_{1j}^{-1}=m_1\lambda_{j1}$). Let us assume (5). To prove (1), we shall fix $\sigma$ as in (5) and find appropriate $\lambda_{ij}$'s satisfying (\ref{equation3}) such that $\omega'_{\sigma(i)\sigma(j)}= \lambda_{ij} \omega_{ij}$. By remark \ref{length3} it is enough to consider 3-cycles (which simplifies notation a great deal).

Fix $1<k<l<s\leq n$ and let $\sigma(i)=k$, $\sigma(j)=l$ and $\sigma(t)=s$. Consider the cycle of length $3$ $(1kl)$. By hypothesis we have $q_{(1kl)}(\omega')=q_{\sigma^{-1}(1kl)\sigma}(\omega)$ and thus we can write
\begin{equation}
\omega'_{kl}=\underbrace{\frac{\omega_{\sigma^{-1}(1)i}\omega_{j\sigma^{-1}(1)}}{\omega'_{1k}\omega'_{l1}}}_{=:\lambda_{ij}}\ \omega_{ij}.
\end{equation}
Similarly, by considering the cycle $(1ls)$, one gets 
\begin{equation}
\lambda_{jt}= \frac{\omega_{\sigma^{-1}(1)j}\omega_{t\sigma^{-1}(1)}}{\omega'_{1l}\omega'_{s1}}
\end{equation}
and, by considering $(1ks)$,
\begin{equation}
\lambda_{it}= \frac{\omega_{\sigma^{-1}(1)i}\omega_{t\sigma^{-1}(1)}}{\omega'_{1k}\omega'_{s1}}.
\end{equation}

We need to prove that the definition of $\lambda_{ij}$ does not depend on the choice of the cycle and that they satisfy equation (\ref{equation3}). This is the same as showing that
\begin{equation}\nonumber
\frac{\omega_{\sigma^{-1}(1)i}\omega_{j\sigma^{-1}(1)}}{\omega'_{1k}\omega'_{l1}}=\frac{\omega_{\sigma^{-1}(v)i}\omega_{j\sigma^{-1}(v)}}{\omega'_{vk}\omega'_{lv}}
\end{equation}
for any $v\notin \left\{1,k,l\right\}$. This is clear since the equation above is equivalent to
\begin{equation}\nonumber
\frac{q_{(vkl)}(\omega')/\omega'_{kl}}{q_{(1kl)}(\omega')/\omega'_{kl}}=\frac{q_{(\sigma^{-1}(v)ij)}(\omega)/\omega_{ij}}{q_{(\sigma^{-1}(1)ij)}(\omega)/\omega_{ij}}
\end{equation}
given that, by hypothesis, $q_{(vkl)}(\omega')=q_{(\sigma^{-1}(v)ij)}(\omega)$ and $q_{(1kl)}(\omega')=q_{(\sigma^{-1}(1)ij)}(\omega)$. 

Finally we see that
\begin{equation}\nonumber
\lambda_{ij}\lambda_{jt}= \frac{\omega_{\sigma^{-1}(1)i}\omega_{j\sigma^{-1}(1)}}{\omega'_{1k}\omega'_{l1}}\frac{\omega_{\sigma^{-1}(1)j}\omega_{t\sigma^{-1}(1)}}{\omega'_{1l}\omega'_{s1}} = \frac{\omega_{\sigma^{-1}(1)i}\omega_{t\sigma^{-1}(1)}}{\omega'_{1k}\omega'_{s1}} =\lambda_{it}
\end{equation}
since $\omega_{ab}\omega_{ba}=1$, for all $a$ and $b$. Thus we get (1).

Clearly (4) implies (6). The converse follows from the fact that $B_{\omega}$ is a basic algebra. Indeed, a progenerator in $mod(B_{\omega})$ such that its endomorphism algebra is $B_{\omega'}$ (thus basic as well) has to be $B_{\omega}$. Thus Morita equivalence in this case implies isomorphism (see \cite{Erd} for more details).
\end{proof}
\begin{remark}\nonumber
Minamoto and Mori have recently showed that the graded Morita equivalence classes within certain families of algebras ($S^n_{\omega}$ in our case) depend only on the isomorphism classes of certain related finite dimensional algebras ($B^n_{\omega}$ in our case). In that sense, their results generalise the previous proposition (\cite{MM}).
\end{remark}

\end{section}
\begin{section}{Birational equivalence}

The context in which these q-cyclic numbers, defined in the previous section, appear naturally is explored below. They actually concern the birational classification of these spaces.  

\begin{definition}
The function division ring of $\mathbb{P}^{n-1}_{\omega}$ is defined to be
\begin{equation}\nonumber
\mathbb{C}(\mathbb{P}^{n-1}_{\omega}):=Frac_{Gr}(S^n_{\omega})_0=\left\{fg^{-1}: f, g\in h(S^n_{\omega}), g\neq 0, deg(f) = deg(g)\right\}.
\end{equation} 
$\mathbb{P}^{n-1}_{\omega}$ and $\mathbb{P}^{n-1}_{\omega'}$ are said to be birationally equivalent if $\mathbb{C}(\mathbb{P}^{n-1}_{\omega})\cong \mathbb{C}(\mathbb{P}^{n-1}_{\omega'})$.
\end{definition}

Let $T_{\omega}$ be the algebra of quantum Laurent polynomials containing $S_{\omega}$ as a subalgebra, i.e., $T_{\omega}=\mathbb{C}\left\langle X_1^{\pm 1},...,X_n^{\pm 1}\right\rangle/\left\langle X_jX_i-\omega_{ij}X_iX_j,\ i,j\in\left\{1,...,n\right\}\right\rangle$. Richard classified, up to isomorphism, this family of algebras, which are called quantum tori (\cite{Rich}).

\begin{lemma}[Richard, \cite{Rich}]\label{Richard}
\begin{enumerate}
\item $T_{\omega}\cong T_{\omega'}$ if and only if there is a matrix $A=(a_{ij})\in GL_n(\mathbb{Z})$ such that $\omega'_{ij}=\prod\limits_{1\leq k, l\leq n} \omega_{kt}^{a_{ki}a_{tj}}$;
\item $T_{\omega}$ is simple if and only if there is not a nonzero vector $a=(a_1,...,a_n)\in \mathbb{Z}^n$ such that, for all $1\leq j\leq n$, $\omega_{1j}^{a_1}...\omega_{nj}^{a_n}=1$;
\item If $T_{\omega}$ is simple, then $T_{\omega}\cong T_{\omega'}$ if and only if $Frac(T_{\omega})\cong Frac(T_{\omega'})$, where $Frac(T_{\omega})$ is the division ring of right fractions of $T_{\omega}$.
\end{enumerate}
\end{lemma}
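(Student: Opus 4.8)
The plan is to realise each $T_\omega$ as the twisted group algebra of $\mathbb{Z}^n$. Writing $X^a := X_1^{a_1}\cdots X_n^{a_n}$ for $a=(a_1,\dots,a_n)\in\mathbb{Z}^n$, the defining relations show that $\{X^a\}$ is a $\mathbb{C}$-basis and that $X^aX^b=\beta_\omega(a,b)X^{a+b}$ for a $2$-cocycle $\beta_\omega\colon\mathbb{Z}^n\times\mathbb{Z}^n\to\mathbb{C}^*$ whose associated skew bicharacter is fixed by the rule $X_iX_j=\omega_{ji}X_jX_i$; thus the entire commutation data is encoded by $\omega$, and each $T_\omega$ is an iterated skew-Laurent extension, hence a Noetherian domain. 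For part (1) the first step is to compute the unit group: I would show that the only units of $T_\omega$ are scalar multiples of monomials, so $U(T_\omega)/\mathbb{C}^*\cong\mathbb{Z}^n$. A $\mathbb{C}$-algebra isomorphism $\phi\colon T_\omega\to T_{\omega'}$ fixes $\mathbb{C}$ pointwise and carries units to units, so it descends to a group isomorphism $\mathbb{Z}^n\to\mathbb{Z}^n$, i.e. a matrix $A\in GL_n(\mathbb{Z})$. Since $\phi$ preserves multiplicative commutators of units, the bicharacters correspond under $A$, and unwinding $\beta_{\omega'}(e_i,e_j)=\beta_\omega(Ae_i,Ae_j)$ in coordinates yields exactly $\omega'_{ij}=\prod_{k,l}\omega_{kl}^{a_{ki}a_{lj}}$. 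Conversely, given such an $A$ I would define $\phi$ by sending each $X_i$ to a scalar times the monomial whose exponent vector is the $i$-th column of $A$, and check the relations using the cocycle identity; invertibility of $A$ over $\mathbb{Z}$ makes $\phi$ an isomorphism. The delicate point here is the unit-group computation, which I would settle by induction on the number of variables along the skew-Laurent tower.

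For part (2) I would identify the centre. A monomial $X^a$ commutes with every $X_j$ precisely when $\prod_i\omega_{ij}^{a_i}=1$ for all $j$, and because $T_\omega$ is $\mathbb{Z}^n$-graded by a domain, the centre is the span of these central monomials, namely the (commutative) twisted group algebra of the sublattice $\Lambda=\{a\in\mathbb{Z}^n:\prod_i\omega_{ij}^{a_i}=1\ \forall j\}$. The hypothesis of the statement is exactly $\Lambda=0$. If $\Lambda\neq 0$, then $Z(T_\omega)$ is a Laurent polynomial ring in $\mathrm{rank}(\Lambda)\ge 1$ variables; choosing a maximal ideal $\mathfrak{m}\subset Z(T_\omega)$, the ideal $\mathfrak{m}T_\omega$ is proper (the quotient is a nonzero quantum torus over the residue field), so $T_\omega$ is not simple. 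Conversely, if $\Lambda=0$ then $Z(T_\omega)=\mathbb{C}$, and I would prove simplicity by the standard support-reduction argument for graded rings: take a nonzero ideal, pick $0\neq z$ in it with minimal support, conjugate by the units $X_j$ and subtract suitable scalar multiples to shrink the support while remaining in the ideal; minimality forces $z$ to be a single monomial, hence a unit, so the ideal is everything.

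Part (3) has an easy direction and a hard one. If $T_\omega\cong T_{\omega'}$ then, both being Ore domains, the isomorphism extends uniquely to their division rings of fractions, giving $Frac(T_\omega)\cong Frac(T_{\omega'})$. The hard direction is the reverse: from $D_\omega:=Frac(T_\omega)$ one must recover $T_\omega$ up to isomorphism, equivalently recover $\beta_\omega$ up to the $GL_n(\mathbb{Z})$-action of part (1). Here simplicity is essential, since when $\Lambda=0$ the centre of $D_\omega$ is $\mathbb{C}$ and the bicharacter is nondegenerate. I would extract the invariant by locating inside $D_\omega^*/\mathbb{C}^*$ a maximal abelian subgroup isomorphic to $\mathbb{Z}^n$ on which multiplicative commutators are scalar, reading off the resulting skew pairing $\mathbb{Z}^n\times\mathbb{Z}^n\to\mathbb{C}^*$, and showing that any two such subgroups differ by an element of $GL_n(\mathbb{Z})$ together with a rescaling of the generators, so that the pairing is well defined up to $GL_n(\mathbb{Z})$-equivalence. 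An isomorphism $D_\omega\cong D_{\omega'}$ then matches these invariants, and part (1) reconstructs $T_\omega\cong T_{\omega'}$.

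I expect this final reconstruction to be the main obstacle: proving that the commutator pairing on a canonical copy of $\mathbb{Z}^n$ inside $D_\omega^*/\mathbb{C}^*$ is a genuine invariant of the division ring, independent of all choices up to $GL_n(\mathbb{Z})$, is precisely where the simplicity (nondegeneracy) hypothesis is indispensable, and it is the one step that cannot be reduced to the elementary monomial bookkeeping used in parts (1) and (2).
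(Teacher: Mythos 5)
First, for context: the paper does not prove this lemma at all --- it is imported as a known classification result of Richard \cite{Rich} --- so your proposal can only be measured on its own terms. On those terms, parts (1), (2) and the easy half of (3) are essentially correct and complete-able exactly as you sketch: units of $T_{\omega}$ are scalar multiples of monomials (a domain graded by the ordered group $\mathbb{Z}^n$ has homogeneous units, or induct along the skew--Laurent tower), so a $\mathbb{C}$-algebra isomorphism induces a matrix $A\in GL_n(\mathbb{Z})$ on $U(T_{\omega})/\mathbb{C}^{*}\cong\mathbb{Z}^n$ and transports the commutator pairing, giving (1), with the converse by explicit construction; the centre is the group algebra of the radical $\Lambda=\{a\in\mathbb{Z}^n:\prod_i\omega_{ij}^{a_i}=1\ \forall j\}$, $T_{\omega}$ is a free module over it (monomial basis indexed by coset representatives of $\mathbb{Z}^n/\Lambda$), so $\mathfrak{m}T_{\omega}$ is a proper nonzero ideal when $\Lambda\neq 0$, and the conjugate-and-subtract support argument gives simplicity when $\Lambda=0$; and an isomorphism of Noetherian Ore domains extends uniquely to the division rings of fractions.

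The genuine gap is the hard direction of (3), and it is not a detail: your reduction is circular. You propose to prove that the commutator pairing read off a maximal rank-$n$ abelian-mod-scalars subgroup of $D_{\omega}^{*}/\mathbb{C}^{*}$ is independent of all choices up to $GL_n(\mathbb{Z})$. But by your own part (1), that invariance claim \emph{is} the theorem: an isomorphism $\phi\colon D_{\omega}\to D_{\omega'}$ carries the monomial subgroup $N_{\omega}$ to a rank-$n$ abelian-mod-scalars subgroup of $D_{\omega'}^{*}$ whose pairing is $\omega$, so asserting that this pairing must be $GL_n(\mathbb{Z})$-equivalent to the one of $N_{\omega'}$ is literally asserting $T_{\omega}\cong T_{\omega'}$; nothing has been reduced, and you offer no argument for the claim. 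It is worth recording how far elementary methods actually reach, and where they stop. Ordering $\mathbb{Z}^n$ lexicographically gives a leading-term group homomorphism $\mathrm{lt}\colon D_{\omega'}^{*}\to N_{\omega'}$ restricting to the identity on $N_{\omega'}$ (well defined on fractions by the Ore condition); applying it to $Y_i=\phi(X_i)$, whose pairwise commutators are the scalars $\omega_{ij}$, produces exponent vectors $c_i\in\mathbb{Z}^n$ with $\beta_{\omega'}(c_j,c_i)=\omega_{ij}$, and simplicity of $T_{\omega}$ (triviality of $\Lambda_{\omega}$) forces $c_1,\dots,c_n$ to be $\mathbb{Z}$-linearly independent. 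So one gets cheaply that $\omega$ is realised by $\beta_{\omega'}$ on a \emph{full-rank sublattice} $L\subseteq\mathbb{Z}^n$. What one does not get is $L=\mathbb{Z}^n$, i.e.\ unimodularity of the matrix $(c_1|\cdots|c_n)$; and running the same argument for $\phi^{-1}$ does not help, because leading terms are not compatible with $\phi$, so the two integer matrices need not be mutually inverse. Bridging exactly this gap --- from finite-index realisation of the pairing to genuine $GL_n(\mathbb{Z})$-equivalence --- is the substance of Richard's theorem, and your proposal, as you yourself flag in the last paragraph, does not contain it.
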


We shall refer to $\omega$ as \textit{generic} whenever $T_{\omega}$ is simple.

\begin{remark}\label{q-com is frac-com}
Observe an important fact about $q$-cyclic numbers: they show up when commuting fractions as follows:
\begin{equation}\label{frac qcom}
X_jX_k^{-1}X_iX_k^{-1}=\omega_{ki}X_jX_iX_k^{-2}=\omega_{ki}\omega_{ij}X_iX_jX_k^{-2}=
\end{equation}
\begin{equation}\nonumber
=\omega_{ki}\omega_{ij}\omega_{jk}X_iX_{k}^{-1}X_jX_k^{-1}=q_{(ijk)}(\omega)X_iX_{k}^{-1}X_jX_k^{-1}.
\end{equation}
\end{remark}

Define $^kS_{q}:=(S_{\omega}[X_k^{-1}])_0$. By the above, It is easy to see that 
\begin{equation}\nonumber
^kS_{q}\cong\mathbb{C}\left\langle Y_1,...,\hat{Y_k},...,Y_{n}\right\rangle/\left\langle Y_jY_i-q_{(kij)}(\omega)Y_iY_j,\ \forall 1\leq i,j\leq n, i,j\neq k\right\rangle
\end{equation}
where $Y_i=X_iX_k^{-1}$. Let $T_q$ be the torus associated with $^kS_q$, i.e., the algebra of Laurent polynomials containing $^kS_q$ as a subalgebra.

\begin{remark}
As the notation suggests, the isomorphism class of $T_q$ does not depend on $k$. This can be shown using lemma \ref{Richard} and keeping in mind relations of the type $q_{(234)}=q_{(123)}q_{(134)}q_{(142)}$. In practice, we will take $k=1$.
\end{remark}

In the following proposition we denote $q_{(1ij)}(\omega)$ by $q_{ij}$ and $q_{(1ij)}(\omega')$ by $q'_{ij}$.

\begin{theorem}\label{bir class}
If there is a matrix $A=(a_{ij})_{2\leq i,j\leq n}$ in $GL_{n-1}(\mathbb{Z})$ such that $q_{ij}'= \prod\limits_{2\leq k, l\leq n} q_{kl}^{a_{ki}a_{lj}}$, then $\mathbb{P}^{n-1}_{\omega}$ and $\mathbb{P}^{n-1}_{\omega'}$ are birationally equivalent. Moreover, if $q=(q_{ij})_{1\leq i,j\leq n-1}$ is generic then the converse is also true.
\end{theorem}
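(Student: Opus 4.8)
The plan is to reduce the birational statement to an isomorphism question about quantum tori, where Richard's classification (Lemma~\ref{Richard}) can be applied directly. The key observation, already recorded in Remark~\ref{q-com is frac-com} and the computation of ${}^kS_q$, is that the degree-zero part of the graded fraction ring of $S^n_\omega$ coincides with the fraction ring of ${}^1S_q$, the algebra $\mathbb{C}\langle Y_2,\dots,Y_n\rangle$ with commutation parameters $q_{ij}=q_{(1ij)}(\omega)$. More precisely, one checks that $\mathbb{C}(\mathbb{P}^{n-1}_\omega)=Frac_{Gr}(S^n_\omega)_0\cong Frac({}^1S_q)\cong Frac(T_q)$, since inverting $X_1$ inside the degree-zero fraction ring already produces a Laurent-type algebra in the $Y_i=X_iX_1^{-1}$, and every degree-zero fraction $fg^{-1}$ can be rewritten in terms of these generators. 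This identifies the birational invariant $\mathbb{C}(\mathbb{P}^{n-1}_\omega)$ with $Frac(T_q)$, the fraction division ring of the rank $n-1$ quantum torus attached to the parameter matrix $q=(q_{ij})$.

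First I would establish the forward implication. Given the matrix $A\in GL_{n-1}(\mathbb{Z})$ with $q'_{ij}=\prod_{2\le k,l\le n}q_{kl}^{a_{ki}a_{lj}}$, part~(1) of Lemma~\ref{Richard}, applied in rank $n-1$ to the parameters $q$ and $q'$, yields a graded algebra isomorphism $T_q\cong T_{q'}$. Passing to fraction rings gives $Frac(T_q)\cong Frac(T_{q'})$, and via the identification above this is exactly $\mathbb{C}(\mathbb{P}^{n-1}_\omega)\cong\mathbb{C}(\mathbb{P}^{n-1}_{\omega'})$, i.e. birational equivalence. This direction requires no genericity, matching the statement.

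For the converse, I would invoke the genericity hypothesis on $q$. By definition $q$ generic means $T_q$ is simple, and I would first argue that $T_{q'}$ is then forced to be simple as well: birational equivalence gives $Frac(T_q)\cong Frac(T_{q'})$, and simplicity of the torus is detected by its fraction ring (its center, or the absence of the nontrivial lattice vector in part~(2) of Lemma~\ref{Richard}), so the isomorphism of division rings transports simplicity from $T_q$ to $T_{q'}$. With both tori simple, part~(3) of Lemma~\ref{Richard} applies: $Frac(T_q)\cong Frac(T_{q'})$ forces $T_q\cong T_{q'}$ as algebras. Then part~(1) of the same lemma produces the matrix $A\in GL_{n-1}(\mathbb{Z})$ with the required relation between the $q'_{ij}$ and the $q_{ij}$, which is precisely the conclusion.

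The main obstacle I anticipate is the identification $\mathbb{C}(\mathbb{P}^{n-1}_\omega)\cong Frac(T_q)$, which is the hinge on which both directions turn. One must verify carefully that the graded localization producing the degree-zero fraction ring agrees with first localizing at $X_1$ to form ${}^1S_q$ and then taking its full Ore fraction ring; this uses that $S^n_\omega$ is a Noetherian Ore domain so that all localizations behave well, and that the change of generators $Y_i=X_iX_1^{-1}$ is a bijective correspondence of fractions. A secondary subtlety is the claim that simplicity of the quantum torus is a property of its fraction division ring, needed to promote $T_q$ simple to $T_{q'}$ simple before Lemma~\ref{Richard}(3) can be used; I would justify this by noting that simplicity corresponds to the division ring $Frac(T_q)$ having the expected transcendence-type invariants, or more directly that a nontrivial normalizing lattice vector as in Lemma~\ref{Richard}(2) would survive into the fraction ring and hence be visible on either side.
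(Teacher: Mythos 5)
Your proposal is correct and follows essentially the same route as the paper: identify $\mathbb{C}(\mathbb{P}^{n-1}_{\omega})$ with $Frac(T_q)$ via $Y_i\mapsto X_iX_1^{-1}$ (using the commutation rule of Remark~\ref{q-com is frac-com} and the rewriting of degree-zero fractions for surjectivity), and then translate both directions into Richard's classification, parts (1) and (3) of Lemma~\ref{Richard}. The only divergence is your extra step transporting simplicity from $T_q$ to $T_{q'}$, which is unnecessary: Lemma~\ref{Richard}(3) requires only one of the two tori to be simple, so genericity of $q$ alone already yields $T_q\cong T_{q'}$ from $Frac(T_q)\cong Frac(T_{q'})$.
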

\begin{proof}
We shall prove that $Frac(T_q)\cong \mathbb{C}(\mathbb{P}^{n-1}_{\omega})$, where $T_q$ is as previously defined. Consider the map $\phi: Frac(T_q)\longrightarrow \mathbb{C}(\mathbb{P}^{n-1}_{\omega})$ such that $\phi(Y_i)=X_iX_1^{-1}$. Note that this is well defined, as $X_jX_1^{-1}X_iX_1^{-1}= q_{ij}X_iX_1^{-1}X_jX_1^{-1}$ (see equation (\ref{frac qcom})). Clearly $\phi$ is injective as it is a nonzero map defined on a division ring, $Frac(T_q)$. Now, suppose $fg^{-1}\in \mathbb{C}(\mathbb{P}^{n-1}_{\omega})$, with $f$ monomial. Then we can write
\begin{equation}\nonumber
fg^{-1}=(gf^{-1})^{-1}=(\sum_i g_if^{-1})^{-1}
\end{equation}
where the $g_i$'s are monomials as well and deg$(g_i)=$deg$(f)$. Thus $g_if^{-1}$ is a product of powers of some $X_{i_j}X_1^{-1}$'s and therefore $fg^{-1}$ lies in the image of $Frac(T_q)$, proving surjectivity of $\phi$.

If $q$ is generic, $T_q$ is simple. By proposition \ref{Richard}, $\mathbb{C}(\mathbb{P}^{n-1}_{\omega})\cong \mathbb{C}(\mathbb{P}^{n-1}_{\omega'})$ if and only if $T_q\cong T_q'$ (and clearly we only need generic $q$ for the nontrivial implication). The same result also shows that this is the case if and only if there is $A\in GL_{n-1}(\mathbb{Z})$ such that $q_{ij}'= \prod\limits_{1\leq k, l\leq n-1} q_{kl}^{a_{ki}a_{lj}}$, hence finishing the proof. 
\end{proof}

\end{section}

\begin{section}{The point variety}

In this section we compute the point variety (recall definition \ref{pt scheme}) of $\mathbb{P}^n_{\omega}$ in terms of its parameters $\omega_{ij}$. Once again, this depends only on the $q$-cyclic numbers. First we recall a well-known corollary of Artin, Tate and Van den Bergh's work (\cite{ATV}) as observed by Mori (\cite{Mori}).

\begin{lemma}[Artin, Tate, Van den Bergh, \cite{ATV}]\label{ptsquadr}
Let $R=T(V)/I$ be a quadratic graded $\mathbb{C}$-algebra, where $V$ is a finite dimensional complex vector space. If $V(I)\subset \mathbb{P}(V^*)\times \mathbb{P}(V^*)$ is the graph of an automorphism of a closed $\mathbb{C}$-subscheme, $E$, then $E$ is the point scheme of $R$.
\end{lemma}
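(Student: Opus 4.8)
The plan is to exploit the fact that $R$ is quadratic in order to reduce the defining equations of each $\Omega_i$ to conditions on consecutive pairs of coordinates, and then to use the hypothesis that $V(I)=\Omega_2$ is a graph to identify every $\Omega_i$ with $E$ compatibly. First I would record the consequence of quadraticity: since $I$ is generated by $I_2$, one has, as subspaces of $V^{\otimes i}$,
\begin{equation}\nonumber
I_i = \sum_{j=0}^{i-2} V^{\otimes j}\otimes I_2\otimes V^{\otimes (i-2-j)}.
\end{equation}
Viewing these as multilinear forms on $(V^*)^i$, a form lying in the $j$-th summand factors as a product of linear forms in the coordinates $p_1,\dots,p_j,p_{j+3},\dots,p_i$ times a quadratic relation evaluated at $(p_{j+1},p_{j+2})$. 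Since the coordinates are points of $\mathbb{P}(V^*)$, hence nonzero, the vanishing of all forms in the $j$-th summand is equivalent to $(p_{j+1},p_{j+2})\in\Omega_2$. Carrying this out scheme-theoretically, which is how \cite{ATV} set up the $\Omega_i$, yields the key identity
\begin{equation}\nonumber
\Omega_i=\left\{(p_1,\dots,p_i): (p_k,p_{k+1})\in\Omega_2,\ 1\leq k\leq i-1\right\},
\end{equation}
so that $\Omega_i$ is the iterated fibre product of copies of $\Omega_2=V(I)$ over the projections to $\mathbb{P}(V^*)$.

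Next I would feed in the hypothesis. Writing $\Omega_2=V(I)$ as the graph $\left\{(p,\phi(p)):p\in E\right\}$ of the automorphism $\phi\colon E\to E$, the condition $(p_k,p_{k+1})\in\Omega_2$ says exactly that $p_k\in E$ and $p_{k+1}=\phi(p_k)$. Hence a point of $\Omega_i$ is determined by $p_1\in E$ through $p_k=\phi^{k-1}(p_1)$, all of which lie in $E$ because $\phi$ maps $E$ to itself; so the first projection $pr^i_1\colon\Omega_i\to E$ is an isomorphism with inverse $p\mapsto(p,\phi(p),\dots,\phi^{i-1}(p))$. Under these identifications each transition map $pr^{i+1}_i$ becomes the identity on $E$, so the inverse system $\left\{\Omega_i, pr^j_i\right\}$ is isomorphic to the constant system with value $E$, and its inverse limit, which is the point scheme of $R$, is therefore $E$.

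The main obstacle is carrying the two reductions out at the level of schemes rather than of closed points: one must verify that the scheme structure on $\Omega_i$ coming from $I_i$ agrees with the fibre-product scheme structure built from $\Omega_2$, which is precisely where quadraticity is genuinely used and is the substance of the argument in \cite{ATV}, and that the graph hypothesis furnishes a scheme-theoretic, not merely set-theoretic, isomorphism $\Omega_i\cong E$. Once these two points are secured, the identification of the transition maps and the passage to the inverse limit are formal.
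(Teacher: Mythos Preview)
Your proposal is correct and follows essentially the same approach as the paper's proof: both use quadraticity to reduce $\Omega_i$ to an iterated fibre product of copies of $\Omega_2$ (the paper writes this as $\Omega_3=\Omega_2\times\mathbb{P}(V^*)\cap\mathbb{P}(V^*)\times\Omega_2$ and then inducts, whereas you go straight to general $i$), and then use the graph hypothesis to identify each $\Omega_i$ with $E$ via $p\mapsto(p,\phi(p),\dots,\phi^{i-1}(p))$, making the inverse system constant. Your explicit flagging of the scheme-theoretic versus set-theoretic point is a welcome addition that the paper handles by citing \cite{ATV}.
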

\begin{proof}
Since $I$ is generated in degree 2, we have $I_3=T_1I_2 + I_2T_1$ and therefore $\Omega_3=\Omega_2\times \mathbb{P}(V^*)\cap \mathbb{P}(V^*)\times \Omega_2$ (\cite{ATV}). Since $\Omega_2=\left\{(x,\sigma(x)): x\in E\right\}$ for some automorphism $\sigma$ of $E$, we get
\begin{equation}\nonumber
\Omega_3=\left\{(x,\sigma(x),\sigma^2(x)): x\in E\right\}.
\end{equation}
Clearly $E\cong\Omega_2\cong\Omega_3$ and, by induction, $E\cong\Omega_d$ for all $d\geq 2$. Thus the point scheme is $E$ as the inverse system is constant.
\end{proof}

We can now compute the point varieties of $S^n_{\omega}$.

\begin{proposition}\label{ptvar}
The point variety of $S^n_{\omega}$ is the subvariety of $\mathbb{P}^{n-1}$ defined by $\bigcap\limits_{q_{(ijk)}(\omega)\neq 1} V(X_iX_jX_k)$, where $V(X_iX_jX_k)$ is the zero locus of $X_iX_jX_k$ in $\mathbb{P}^{n-1}$.
\end{proposition}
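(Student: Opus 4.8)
The plan is to apply Lemma~\ref{ptsquadr}, so the main task is to show that the defining relations of $S^n_\omega$ cut out, inside $\mathbb{P}(V^*)\times\mathbb{P}(V^*)$, the graph of an automorphism of some closed subscheme $E$, and then identify $E$ explicitly. First I would set up coordinates: writing $V=\bigoplus \mathbb{C}X_i$ with dual coordinates $x_1,\dots,x_n$ on $\mathbb{P}(V^*)$, the relation $X_jX_i-\omega_{ij}X_iX_j$ is the bilinear form whose zero locus in $\mathbb{P}(V^*)\times\mathbb{P}(V^*)$, with coordinates $(x_1,\dots,x_n)$ and $(y_1,\dots,y_n)$, is $x_j y_i-\omega_{ij}x_i y_j=0$. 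So $\Omega_2=V(I)$ is the set of pairs $(x,y)$ satisfying $x_j y_i=\omega_{ij}x_i y_j$ for all $i<j$. The first step is to analyze these equations: on the open set where some coordinate $x_k\neq0$, one solves for the $y_i$ in terms of $x$, exhibiting $\Omega_2$ locally as a graph over its projection $E$, and to check that this projection map is an automorphism of $E$.

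Next I would determine $E$ and the automorphism concretely. The equations $x_j y_i=\omega_{ij}x_i y_j$ force a rigid relationship: on a point $x$ where exactly the coordinates indexed by a subset $\{i,j,k,\dots\}$ are nonzero, consistency among the three-term products leads back to the $q$-cyclic numbers via the identity in Remark~\ref{q-com is frac-com}. The key computation is that a point $x$ with $x_i,x_j,x_k\neq0$ admits a well-defined partner $y$ precisely when the cyclic compatibility $q_{(ijk)}(\omega)=1$ holds; otherwise the system forces one of $x_i,x_j,x_k$ to vanish. This is exactly what produces the locus $\bigcap_{q_{(ijk)}(\omega)\neq1}V(x_ix_jx_k)$: the subscheme $E$ consists of those points whose support of nonzero coordinates carries no ``frustrated'' triple. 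I would then verify that on each such component the map sending $x$ to its partner $y$ is a diagonal automorphism (scaling the coordinates by the relevant $\omega$'s), hence an automorphism of $E$, so that $\Omega_2$ is genuinely the graph of an automorphism $\sigma$ of $E$.

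With $E$ and $\sigma$ in hand, Lemma~\ref{ptsquadr} immediately gives that the point scheme equals $E$, and passing to the reduced structure yields the asserted point variety. The step I expect to be the main obstacle is the careful bookkeeping showing that the naive solution for $y$ in terms of $x$ is \emph{consistent} across the different charts $x_k\neq0$ exactly on the locus described, i.e.\ that the three-variable obstruction governed by $q_{(ijk)}(\omega)$ is the only obstruction and that higher combinatorial configurations reduce to triples. Here I would lean on Remark~\ref{length3}, which shows any cyclic incompatibility is detected by length-three cycles, so that vanishing of all the relevant triple products $X_iX_jX_k$ is both necessary and sufficient to pin down the support of admissible points. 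Once the triple obstruction is isolated, identifying the partner assignment as a genuine automorphism and invoking the lemma is routine.
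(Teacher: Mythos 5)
Your set-theoretic analysis of $\Omega_2$ is correct and is genuinely different from the paper's route. The paper encodes the relations in a $\binom{n}{2}\times n$ matrix $A_x$, characterises $E_1$ as the locus where all $n\times n$ minors of $A_x$ vanish, and computes those minors by Hattori's graph-theoretic technique (cycles of every length appear, and Remark \ref{length3} is then needed to reduce to triples). You instead solve the bilinear system $x_jy_i=\omega_{ij}x_iy_j$ directly: the equations force the support of $y$ to equal the support of $x$, on that support $y$ is determined up to scalar by any one coordinate, and pairwise consistency relative to a reference index is exactly the condition $q_{(ijk)}(\omega)=1$ for triples in the support. This is arguably cleaner, since the triple obstruction appears immediately, with no determinant computations and no need of Remark \ref{length3} for this direction; your identification of the partner map as a diagonal scaling on each irreducible component is also correct (admissibility of the support is precisely what makes the scalars $d_i=\omega_{ik_0}$, for a fixed reference $k_0$, consistent across pairs).

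However, there is a genuine gap at the final step, where you assert that ``$\Omega_2$ is genuinely the graph of an automorphism $\sigma$ of $E$'' and that invoking Lemma \ref{ptsquadr} is routine. Your chart-by-chart computation deals with closed points only, so what it actually establishes is that $(\Omega_2)_{red}$ is the graph of an automorphism of $(E_1)_{red}$. But Lemma \ref{ptsquadr} requires its hypothesis at the scheme level: $V(I)=\Omega_2$, with its possibly non-reduced structure (the ideal generated by the forms $x_jy_i-\omega_{ij}x_iy_j$ need not be radical), must itself be the graph of an automorphism of a closed subscheme $E$, and a morphism that is bijective on closed points need not be an isomorphism of schemes. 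This is exactly why the paper, after showing $(E_2)_{red}=(E_1)_{red}$, invokes the lemma of Le Bruyn, Smith and Van den Bergh: if $(\Omega_2)_{red}$ is the graph of an isomorphism between $(E_1)_{red}$ and $(E_2)_{red}$, then $\Omega_2$ is the graph of an isomorphism between $E_1$ and $E_2$. Without this upgrading step, or some substitute argument about the inverse system of the $\Omega_d$'s (e.g.\ that its underlying topological spaces stabilise and that reduction interacts well with the limit), the appeal to Lemma \ref{ptsquadr} is unjustified, and the proof is incomplete at precisely the point you labelled routine.
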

\begin{proof}
This proof was first sketched by Hattori (\cite{Hat}). It also uses some ideas from Vancliff (\cite{Van}).
For practical purposes we shall consider the generators of $I^n_{\omega}$ rewritten in the form $\theta_{ji}X_jX_i-\theta_{ij}X_iX_j$ where $\theta$ is a fixed $n\times n$ matrix of parameters in $\mathbb{C}^*$ such that $\theta_{ij}\theta_{ji}^{-1}=\omega_{ij}$.

Following the notation set up in the introduction, we have $\Omega_2\subset \mathbb{P}^{n-1}\times\mathbb{P}^{n-1}$ and let us consider $E_1$ (respectively $E_2$) to be the image of $\Omega_2$ under the projection map on the first (respectively second) component from $\mathbb{P}^{n-1}\times\mathbb{P}^{n-1}$ to $\mathbb{P}^{n-1}$. The first step of this proof will be to determine $E_1$. We have
\begin{equation}\nonumber
E_1=\left\{x\in\mathbb{P}^{n-1}: \exists y\in\mathbb{P}^{n-1}: (x,y)\in\Omega_2\right\}
\end{equation}
and the defining condition can be rewritten in the form $A_x\bar{y}=0$ where $A_x$ is a $\binom{n}{2}\times n$ matrix in the coordinates of $x$ and $\bar{y}$ an $n$-dimensional column vector with entries the coordinates of $y$. Index lines of the matrix $A_x$ by pairs $(i,j)$ with $1\leq i < j \leq n$ and columns by $1\leq k \leq n$. Then we can describe the matrix $A_x=(a_{(i,j)k})$ (where $x=(x_1,...,x_n)$ is ) by setting
\begin{equation}\label{matrixA}
a_{(i,j)i}= \theta_{ji}x_j,\ a_{(i,j)j}=-\theta_{ij}x_i,\ a_{(i,j)k}= 0,\forall k\neq i,j.
\end{equation}

Note that we can recover the generators of $I^n_{\omega}$ in the algebra $S^n_{\omega}$ by taking the entries of the vector $A_XX$, where $X=(X_1,...,X_n)$ is the vector formed by the generators of $S^n_{\omega}$. This is because $I^n_{\omega}$ is generated in degree 2. A similar matrix appears in the original work of Artin and Schelter (\cite{AS}).

For each $x\in E_1$, we need at least a 1-dimensional space of solutions for the equation $A_x\bar{y}=0$ so that we get a point in the projective space. This happens if and only if the rank of $A_x$ is less or equal than $n-1$. The rank of $A_x$ is at least $n-1$ since, given a nonzero $x_i$ we have a diagonal $(n-1)\times(n-1)$ submatrix with entries $\theta_{ij}x_i$, where $1\leq j \leq n$ and $j\neq i$. Therefore $x\in E_1$ if and only if the rank of $A_x$ is exactly $n-1$, i.e., if the $n\times n$ minors of $A_x$ equal to zero.

The focus is then on computing such minors for $A_X$. For this we use Hattori's technique. Note that $A_X$ is such that each row has exactly two nonzero entries (see description (\ref{matrixA})). Let $\tilde{A}_X$ be the a submatrix formed by $n$ rows of $A_X$. We want to compute $det(\tilde{A}_X)$. To $\tilde{A}_X$ we associate a graph $G$ with $n$ vertices, numbered $1$ to $n$, such that there is an edge between two vertices $i$ and $j$ if and only if the line $(i,j)$ is in $\tilde{A}_X$ or, equivalently, if $X_i$ and $X_j$ appear in one row of $\tilde{A}_X$. We denote by $E(G)$ the set of edges of a graph $G$. Each row corresponds to an edge, each column to a vertex and vice-versa. We permute first the rows and then the columns of $\tilde{A}_X$ (but keep the notation) in such a way that, whenever possible, adjacent rows and adjacent columns correspond, respectively to edges and vertices lying in the same connected component of $G$.

If $det(\tilde{A}_X)\neq 0$, then each connected component of $G$ has exactly the same number of edges and vertices. This is equivalent to each connected component containing exactly one cycle. $G$ itself clearly has this property (i.e., there are $n$ rows and $n$ variables). Indeed, if there is a connected component $D$ with less edges than vertices, we can find a rectangular block in $\tilde{A}_X$, and thus $det(\tilde{A}_X)= 0$ since the column vectors forming this block would be linearly dependent.

So we have a square block decomposition of $\tilde{A}_X$ whose blocks are in bijection with the connected components of $G$. We focus on a given block $B$ with connected graph $D$. To compute the determinant of $B$, we regroup the rows and then the columns (but keep the notation) such that the first rows correspond to the edges and the first columns to the vertices on the cycle of $D$ - call it $C$. Then we get a matrix
\begin{equation}\nonumber
B=\left(\begin{array}{ccc}B_1 & 0 \\ * & B_2 \end{array}\right)
\end{equation}
where $B_1$ is the square matrix with the information from $C$ and $B_2$ is the square matrix with the information from the \textit{legs} - denote them by $L$ - of $D$ (i.e., acyclic paths with one vertex but no edges on $C$). Also, by permutation of rows and columns, $B_2$ can be made lower triangular and thus its determinant will be the product of the elements in the diagonal. If we orient the legs such that the source of an edge $e$ (denoted by $s(e)$) is closer to the cycle than the target of $e$ (denoted by $t(e)$), then we can easily compute this product by
\begin{equation}\nonumber
det(B_2)= \prod\limits_{e\in \rm{E(L)}} \theta_{s(e)t(e)}X_{s(e)}.
\end{equation}

The determinant of $B_1$ can be computed by choosing a vertex of the cycle, $v$, and using the Laplace rule along the corresponding column. We orient the cycle clockwise. It is easy to see that we get
\begin{equation}\nonumber
det(B_1)= \prod\limits_{e\in \rm{E(C)}}\theta_{s(e)t(e)}X_{s(e)}-\prod\limits_{e\in \rm{E(C)}}\theta_{t(e)s(e)}X_{t(e)}.
\end{equation}
Note that $det(B)=det(B_1)det(B_2)=0$ if and only if
\begin{equation}\nonumber
\prod\limits_{e\in \rm{E(C)}}\theta_{s(e)t(e)}=\prod\limits_{e\in \rm{E(C)}}\theta_{t(e)s(e)}\ \ \vee\ \ \prod\limits_{e\in E(D)}X_{s(e)}=0
\end{equation}
or equivalently, if $i_1,...,i_l$ are the vertices of the cycle of $C$,
\begin{equation}\nonumber
q_{(i_1...i_l)}(\omega)=1 \vee \prod\limits_{e\in E(D)}X_{s(e)}=0.
\end{equation}

Thus the determinant of $\tilde{A_X}$ is nonzero whenever, for any cycle $(i_1...i_l)$ in the graph $G$, $q_{(i_1...i_l)}(\omega)\neq 1$. Let $\Delta_{\omega}$ be the set of graphs with $n$ vertices and $n$ edges such that each connected component contains exactly one cycle and for each such cycle $(i_1...i_l)$ we have  $q_{(i_1...i_l)}(\omega)\neq 1$. Then, by the above, we get
\begin{equation}\nonumber
E_1=\bigcap\limits_{G\in\Delta_{\omega}}V(\prod\limits_{e\in E(G)}X_{s(e)}).
\end{equation}
We will now show that $(E_1)_{red}$, i.e., the reduced structure of $E_1$, is in fact the same as $\bigcap\limits_{q_{(ijk)}(\omega)\neq 1} V(X_iX_jX_k)$. For this effect we ignore multiplicities in the formula above. Suppose $x\in (E_1)_{red}$ and choose $(ijk)$ such that $q_{(ijk)}(\omega)\neq 1$. Consider $G$ the graph consisting of the triangle with vertices $i, j$ and $k$ and with $n-3$ edges with source on $k$ and target on the remaining $n-3$ vertices. This graph is clearly in $\Delta_{\omega}$ and therefore $x\in V(X_iX_jX_k)$. Conversely, suppose $x\in \bigcap\limits_{q_{(ijk)}(\omega)\neq 1} V(X_iX_jX_k)$. Since any $q$-cyclic number can be written as a product of $q$-cyclic numbers of length 3 (see \ref{length3}), for any graph $G\in\Delta_{\omega}$ we have some triple $(ijk)$ such that $q_{(ijk)}(\omega)\neq 1$ and $i,j,k$ are contained in some cycle of $G$. Hence $x\in (E_1)_{red}$.

It remains to prove that $\Omega = E_1$ and that, therefore, $\Omega_{red}$ can be described as wanted. With a similar argument to the above on the right, we can easily see that $(E_2)_{red} = (E_1)_{red}$. Thus, the right projection following the inverse of the left projection can be regarded as an isomorphism whose graph is $(\Omega_2)_{red}$. 
\begin{lemma}[Le Bruyn, Smith, Van den Bergh, \cite{LSV}]
If $(\Omega_2)_{red}$ is the graph of an isomorphism between $(E_1)_{red}$ and $(E_2)_{red}$ then $\Omega_2$ is the graph of an isomorphism between $E_1$ and $E_2$.
\end{lemma}
This lemma proves that $\Omega_2$ can be regarded as the graph of an automorphism of $E_1$. Then by lemma \ref{ptsquadr} we have that $\Omega=E_1$.

\end{proof}

\end{section}
\begin{section}{Noncommutative projective spaces of dimensions 2 and 3}
In this section we shall study $\mathbb{P}_{\omega}^3$ and compare the results with the ones obtained for $\mathbb{P}_{\omega}^2$ by Mori (\cite{Mori}). We also use proposition \ref{ptvar} to provide examples of noncommutative projective spaces which are not isomorphic (see theorem \ref{invariant points}). It will be useful to determine the possible point varieties of $\mathbb{P}_{\omega}^3$. This is summarised in the following corollary, an easy application of proposition \ref{ptvar}.
\begin{corollary}\label{P3points}
The point variety of $\mathbb{P}_{\omega}^3$ is isomorphic to one of the following:
\begin{enumerate}
\item $\mathbb{P}^3$ if $q_{(123)}=q_{(124)}=q_{(134)}=1$;
\item $V(X_1,X_2)\cup V(X_3)\cup V(X_4)$ if one of the following holds:
\begin{itemize}
\item $q_{(123)}=q_{(124)}=1$ and $q_{(134)}\neq 1$;
\item $q_{(123)}=q_{(134)}=1$ and $q_{(124)}\neq 1$;
\item $q_{(124)}=q_{(134)}=1$ and $q_{(123)}\neq 1$;
\item $q_{(123)}=1$ and $q_{(124)}=q_{(134)}\neq 1$;
\item $q_{(124)}=1$ and $q_{(123)}=q_{(134)}\neq 1$;
\item $q_{(134)}=1$ and $q_{(123)}=q_{(124)}\neq 1$.
\end{itemize}
\item $V(X_{1})\cup V(X_2,X_3)\cup V(X_2,X_4)\cup V(X_3,X_4)$ if one of the following holds
\begin{itemize}
\item $q_{(123)}=1$ and $1\neq q_{(124)}\neq q_{(134)}\neq 1$;
\item $q_{(124)}=1$ and $1\neq q_{(123)}\neq q_{(134)}^{-1} = q_{(143)}\neq 1$;
\item $q_{(134)}=1$ and $1\neq q_{(123)}\neq q_{(124)}\neq 1$;
\item $q_{(123)}, q_{(124)}, q_{(134)} \neq 1$ and $q_{(123)}q_{(134)}=q_{(124)}^{-1}=q_{(142)}$.
\end{itemize}
\item $V(X_1,X_2)\cup V(X_1,X_3)\cup V(X_1,X_4)\cup V(X_2,X_3)\cup V(X_2,X_4)\cup V(X_3,X_4)$ otherwise.
\end{enumerate}
\end{corollary}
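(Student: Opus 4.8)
The plan is to specialize Proposition \ref{ptvar} to the case $n=4$, so that $S^4_\omega$ yields $\mathbb{P}^3_\omega=Proj(S^4_\omega)$, and then read off the intersection $\bigcap_{q_{(ijk)}(\omega)\neq 1} V(X_iX_jX_k)$ combinatorially. The first step is to record which length-$3$ cyclic $q$-numbers are relevant. On four indices there are, up to inversion, exactly four triangles $\{1,2,3\}$, $\{1,2,4\}$, $\{1,3,4\}$, $\{2,3,4\}$, with associated numbers $q_{(123)}$, $q_{(124)}$, $q_{(134)}$, $q_{(234)}$. These are not independent: as in the remark of Section 3 one has
\begin{equation}\nonumber
q_{(234)}(\omega)=q_{(123)}(\omega)\,q_{(134)}(\omega)\,q_{(124)}(\omega)^{-1},
\end{equation}
so that the point variety is governed by the three free parameters $q_{(123)}, q_{(124)}, q_{(134)}$ while the fourth triangle is determined.

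The second step is a clean reformulation of the intersection. Since $V(X_iX_jX_k)=V(X_i)\cup V(X_j)\cup V(X_k)$, a point of $\mathbb{P}^3$ lies in $\bigcap_{q_{(ijk)}\neq 1}V(X_iX_jX_k)$ precisely when its set of vanishing coordinates meets every \emph{active} triangle $\{i,j,k\}$ (those with $q_{(ijk)}\neq 1$). Writing $T$ for the collection of active triangles, the point variety is therefore the union of the coordinate subspaces $V(X_i:i\in Z)$ taken over the minimal transversals (hitting sets) $Z$ of $T$. This reduces the problem to the combinatorics of the hypergraph $T$ on four vertices, and the isomorphism type of the resulting variety depends only on the configuration of $T$ up to the $\Sigma_4$-action permuting coordinates.

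The third step is the case analysis on $T$. The identity above forces $|T|\in\{0,2,3,4\}$: if three of the four numbers equal $1$ then so does the fourth, so $|T|=1$ cannot occur. For $|T|=0$ the intersection is empty and the variety is all of $\mathbb{P}^3$, giving case (1). For $|T|=2$ the two triangles necessarily share exactly two vertices, so the minimal transversals are the two shared singletons together with the complementary pair, yielding two hyperplanes and one line, case (2). For $|T|=3$ any three of the four triangles share a common vertex $v$, whose singleton is a transversal, together with the three two-element transversals inside the remaining vertices; this gives one hyperplane and three lines, case (3). For $|T|=4$ no two coordinates may be simultaneously nonzero, so the minimal transversals are all $\binom{4}{2}$ pairs and the variety is the union of all six coordinate lines, case (4). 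In each case one identifies the variety up to the coordinate permutation carrying the active configuration to the normal form in the statement.

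The main obstacle I anticipate is not the geometry, where the hitting-set computations are routine, but the bookkeeping needed to translate each combinatorial configuration of $T$ back into explicit conditions on the three free numbers $q_{(123)}, q_{(124)}, q_{(134)}$. Because $q_{(234)}$ is forced by the identity above, an equality such as $q_{(124)}=q_{(134)}$ or $q_{(123)}=q_{(134)}^{-1}$ can silently switch $q_{(234)}$ between $1$ and $\neq 1$, thereby moving a parameter tuple from one case to another; this is exactly what generates the several bullet points under cases (2) and (3). The care required is to verify that the listed parameter conditions partition the parameter space and that each corresponds to precisely one transversal configuration, using $q_{(234)}=q_{(123)}q_{(134)}q_{(124)}^{-1}$ to resolve the boundary equalities.
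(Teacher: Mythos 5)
Your proposal is correct and is essentially the paper's own argument: the paper gives no separate proof of Corollary \ref{P3points}, presenting it as an easy application of Proposition \ref{ptvar}, and your reduction to minimal hitting sets (transversals) of the active triangles, governed by the relation $q_{(234)}=q_{(123)}q_{(134)}q_{(124)}^{-1}$, together with the classification by $|T|\in\{0,2,3,4\}$, is exactly that application. One concrete remark for when you carry out the deferred bookkeeping: your identity shows that two bullets of the statement as printed contain inversion typos --- in case (2) the fifth bullet should read $q_{(124)}=1$ and $q_{(123)}=q_{(134)}^{-1}\neq 1$ (as printed it contradicts the second bullet of case (3), since $q_{(124)}=1$ and $q_{(123)}=q_{(134)}\neq 1$ generically forces $q_{(234)}=q_{(123)}q_{(134)}\neq 1$, i.e.\ three active triangles), and in case (3) the last bullet should read $q_{(123)}q_{(134)}=q_{(124)}$ rather than $q_{(124)}^{-1}$, since the condition being encoded is precisely $q_{(234)}=1$.
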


Observe that the description above tells us that the point varieties in case 2 ore formed by one line and two hyperplanes, in case 3 by one hyperplane and three lines and in case 4 by six lines.

Concerning $\mathbb{P}_{\omega}^2$ we have the following result. Note that it also solves the classification problem for the family $\mathbb{P}_{\omega}^2$.

\begin{theorem}[Mori, \cite{Mori}]\label{Mori}
The following are equivalent:
\begin{enumerate}
\item $Gr(S^3_{\omega})$ is equivalent to $Gr(S^3_{\omega'})$;
\item $\mathbb{P}^2_{\omega}$ is isomorphic to $\mathbb{P}^2_{\omega'}$;
\item $\mathbb{C}(S^3_{\omega})$ is isomorphic to $\mathbb{C}(S^3_{\omega'})$.
\end{enumerate}
\end{theorem}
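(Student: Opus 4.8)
The plan is to treat condition (1) as the hub and to show that each of the three conditions is equivalent to the single scalar condition $q_{(123)}(\omega')\in\{p,p^{-1}\}$, where $p:=q_{(123)}(\omega)$. The starting observation is that for $n=3$ there is essentially one cyclic $q$-number: the only $3$-cycles in $\Sigma_3$ are $(123)$ and $(132)$, with $q_{(132)}(\omega)=q_{(123)}(\omega)^{-1}$ by the identity $\omega_{ab}\omega_{ba}=1$. Conjugation of $(123)$ by an even permutation returns $(123)$, while conjugation by an odd permutation returns $(132)$. Hence condition (5) of Theorem \ref{graded equivalence}, specialised to $n=3$, reads exactly $q_{(123)}(\omega')\in\{p,p^{-1}\}$, and Theorem \ref{graded equivalence} already supplies the equivalence of this with (1), i.e. with $Gr(S^3_\omega)\cong Gr(S^3_{\omega'})$.

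Next I would dispatch the easy implications out of (1). For (1)$\Rightarrow$(2), Zhang's Theorem \ref{Zhang twist} turns a graded equivalence into an equivalence of $Tails$ (hence of $tails$) carrying $\pi S^3_\omega(k)$ to $\pi S^3_{\omega'}(k)$ for every $k$; in particular it sends $O^3_\omega$ to $O^3_{\omega'}$, which is precisely the definition of $\mathbb{P}^2_\omega\cong\mathbb{P}^2_{\omega'}$. For (1)$\Rightarrow$(3), I would feed $p'\in\{p,p^{-1}\}$ into the forward direction of Theorem \ref{bir class}: here the associated torus $T_q$ is the rank-$2$ quantum torus with single parameter $p$, and a direct computation of the transformation rule shows that a matrix $A\in GL_2(\mathbb{Z})$ changes the parameter by $p\mapsto p^{\det A}=p^{\pm1}$. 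Thus the identity matrix and the flip $\bigl(\begin{smallmatrix}0&1\\1&0\end{smallmatrix}\bigr)$ realise $p$ and $p^{-1}$, so $p'\in\{p,p^{-1}\}$ supplies the hypothesis of Theorem \ref{bir class} and yields birational equivalence (3).

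The substance lies in the two converses. For (3)$\Rightarrow$(1), the proof of Theorem \ref{bir class} identifies $\mathbb{C}(\mathbb{P}^2_\omega)$ with $Frac(T_q)$ for the rank-$2$ torus of parameter $p$; when $p$ is not a root of unity, $T_q$ is simple, and Lemma \ref{Richard}(3) together with Theorem \ref{bir class} force $p'\in\{p,p^{-1}\}$, whence (1). For (2)$\Rightarrow$(1), I would first invoke the point-variety invariant (Theorem \ref{invariant point}, evaluated through Proposition \ref{ptvar}): the point variety is all of $\mathbb{P}^2$ when $p=1$ and the triangle $V(X_1X_2X_3)$ when $p\neq1$, so an isomorphism of spaces already forces $p=1\Leftrightarrow p'=1$ and settles the commutative case. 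To pin down $p$ up to inversion in the noncommutative case I would use the rigidity of the ample/Serre structure on $D^b(tails(S^3_\omega))$: for an AS-regular algebra of dimension $3$ the Serre functor is $(-)(-3)$ composed with the autoequivalence induced by the point-scheme automorphism and a shift, and any equivalence commutes with Serre functors; applying this to the object $O^3_\omega$ shows that an equivalence fixing the structure sheaf must match the twisted sequences $\{O^3_\omega(k)\}$ up to that automorphism, which is exactly the ambiguity $p\mapsto p^{\pm1}$. By Theorem \ref{Zhang twist} this returns a graded equivalence, giving (1).

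The main obstacle is the non-generic case in (3)$\Rightarrow$(1), namely $p$ a root of unity. Here $T_q$ is no longer simple, its quotient division ring is a symbol (cyclic) algebra over the rational function field that is its centre, and Lemma \ref{Richard}(3) no longer applies, so birational equivalence cannot be read off directly from the parameter. I would handle this by a separate, hands-on analysis: compute the centre of $Frac(T_q)$ and the class of the symbol algebra it determines, and show that an isomorphism of these division rings still recovers $p$ up to inversion (equivalently, fall back on the finer geometric datum of the point scheme together with its automorphism rather than the bare variety). This root-of-unity analysis, not the generic reduction, is where the real work sits.
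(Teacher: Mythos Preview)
The paper does \emph{not} prove this theorem: it is quoted as a result of Mori with a citation, and no proof is supplied. The only argument the paper offers is the second remark following the statement, which observes that for generic $q$ (i.e.\ $p=q_{(123)}(\omega)$ not a root of unity) the implication (3)$\Rightarrow$(1) follows from Theorem~\ref{bir class} together with Theorem~\ref{graded equivalence}, via exactly the computation $p'=p^{\det A}=p^{\pm1}$ that you reproduce. So the portion of your proposal dealing with (1)$\Leftrightarrow$(5), (1)$\Rightarrow$(2), (1)$\Rightarrow$(3), and the generic case of (3)$\Rightarrow$(1) is in line with what the paper itself sketches; the rest is not in the paper at all and must be judged on its own merits.

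There are two genuine gaps. First, your (2)$\Rightarrow$(1) argument via Serre functors does not close. Commutation with the Serre functor only tells you that an equivalence sending $O_\omega$ to $O_{\omega'}$ also sends $O_\omega(-3)$ to $O_{\omega'}(-3)$ (up to the canonical autoequivalence), hence controls the shifts in steps of $3$; it says nothing directly about $O_\omega(1)$ or $O_\omega(2)$, which is exactly what you need to invoke Theorem~\ref{Zhang twist}. The claim that the residual ambiguity ``is exactly $p\mapsto p^{\pm1}$'' is asserted, not proved, and in fact the Serre-functor argument alone cannot distinguish among the three intermediate twists. Mori's original argument uses the point scheme \emph{together with its automorphism} $\sigma$ (not just the underlying variety): for $p\neq1$ the point scheme is the triangle and $\sigma$ acts on each line by scaling, with eigenvalue data that recovers $p$ up to inversion; this is the missing invariant. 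Second, in (3)$\Rightarrow$(1) at roots of unity you only outline a strategy (compute the centre, identify the symbol class) without carrying it out; since Lemma~\ref{Richard}(3) genuinely fails there, this case requires a separate computation that your proposal does not provide.
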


\begin{remark}
This theorem clearly shows that point varieties of isomorphic spaces within the family $\mathbb{P}_{\omega}^2$ are the same. There are only two possibilities for the point variety: either $\mathbb{P}^2$ or the triangle formed by the lines $x=0$, $y=0$ and $z=0$ in $\mathbb{P}^2$ with coordinates $(x:y:z)$.  In the first case we are talking about a linear $\mathbb{P}_{\omega}^2$, i.e., $S^3_{\omega}$ is a twisted coordinate ring and thus, by theorem \ref{Mori}, its category of graded modules is equivalent to the one of commutative polynomials. Hence, an isomorphic noncommutative projective space would also be isomorphic to the commutative one and therefore it would have the same point variety. On the other hand, if the point variety is a triangle, then the noncommutative projective space is not isomorphic to the commutative one and thus the point variety has to be preserved under equivalence.
\end{remark}
\begin{remark}
For generic $q$ (in this case, just meaning that $q_{12}:=q_{(123)}(\omega)$ is not a root of unit), the theorem above is also a corollary of theorem \ref{bir class}. In fact, suppose that $\mathbb{P}^{2}_{\omega}$ and $\mathbb{P}^{2}_{\omega'}$ are birationally equivalent. Then there is $A\in GL_{2}(\mathbb{Z})$ such that $q_{12}'= q_{12}^{det(A)}$ and thus, by theorem \ref{graded equivalence}, $Gr(S_{\omega})$ is equivalent to $Gr(S_{\omega'})$.
\end{remark}

The theorem above is not true for higher dimensional $\mathbb{P}_{\omega}^n$'s as the following example shows:
\begin{example}
Let $X=\mathbb{P}^3_{\omega}$ and $X'=\mathbb{P}^3_{\omega'}$ where:
\begin{equation}\nonumber
\omega=(\omega_{ij})_{1\leq i,j\leq 4}=\left(\begin{array}{cccc}1&1&1&1/2\\ 1&1&1&1\\ 1&1&1&1\\ 2&1&1&1\end{array}\right)
\end{equation}
and
\begin{equation}\nonumber
\omega'=(\omega'_{ij})_{1\leq i,j\leq 4}=\left(\begin{array}{cccc}1&1&2&1/2\\ 1&1&1&1\\ 1/2&1&1&1/8\\ 2&1&8&1\end{array}\right).
\end{equation}
We will use theorem \ref{bir class} to check that $X$ and $X'$ are birational. We use the notation, as in the proposition, $q_{ij}:=q_{(1ij)}(\omega)$ and $q_{ij}':=q_{(1ij)}(\omega')$. Consider the matrix
\begin{equation}\nonumber
A=\begin{pmatrix}
a_{22} & a_{23} & a_{24} \\
a_{32} & a_{33} & a_{34} \\
a_{42} & a_{43} & a_{44} \end{pmatrix}
=\begin{pmatrix}
1 & 0 & 0 \\
0 & -1 & 0 \\
-1 & 0 & 1 \end{pmatrix}.
\end{equation}
We now check that this matrix relates $q'_{ij}$'s and $q_{ij}$'s as expected. Since
\begin{equation}\nonumber
q_{23}=1,\ \ \ q_{24}=q_{34}=2\ \ \ {\rm and} \ \ \ q_{23}'=q_{34}'=1/2,\ \ \ q_{24}'=2, 
\end{equation}
by using the fact that $q_{ij}=q_{ji}^{-1}$ we have:
\begin{equation}\nonumber
q_{23}'=q_{23}^{a_{22}a_{33}-a_{32}a_{23}}q_{24}^{a_{22}a_{43}-a_{42}a_{23}}q_{34}^{a_{32}a_{43}-a_{42}a_{33}}= 1.2^0.2^{-1}=1/2
\end{equation}
\begin{equation}\nonumber
q_{24}'=q_{23}^{a_{22}a_{34}-a_{32}a_{24}}q_{24}^{a_{22}a_{44}-a_{42}a_{24}}q_{34}^{a_{32}a_{44}-a_{42}a_{34}}=1.2^1.2^0=2
\end{equation}
\begin{equation}\nonumber
q_{34}'=q_{23}^{a_{23}a_{34}-a_{33}a_{24}}q_{24}^{a_{23}a_{44}-a_{43}a_{24}}q_{34}^{a_{33}a_{44}-a_{43}a_{34}}=1.2^0.2^{-1}=1/2
\end{equation}
as expected, thus proving the birational equivalence of $X$ and $X'$.

However, using corollary \ref{P3points}, we can easily see that their point varieties are not isomorphic: the point variety of $X$ is formed by two hyperplanes and one line while the point variety of $X'$ is formed by six lines. Thus $X$ cannot be isomorphic to $X'$.
\end{example}

One may ask, however, how far theorem \ref{Mori} is from being true for $\mathbb{P}_{\omega}^3$. The previous example shows that (3) does not imply (2) (and thus neither (1)) because some birational equivalences fail to preserve the point variety. The natural question is then: what can we say about birationally equivalent $\mathbb{P}_{\omega}^3$'s with isomorphic point varieties?

\begin{example}
Let $X=\mathbb{P}^3_{\omega}$ and $X'=\mathbb{P}^3_{\omega'}$ where:
\begin{equation}\nonumber
\omega=(\omega_{ij})_{1\leq i,j\leq 4}=\left(\begin{array}{cccc}1&1&1&1/2\\ 1&1&1&1\\ 1&1&1&4\\ 2&1&1/4&1\end{array}\right)
\end{equation}
and
\begin{equation}\nonumber
\omega'=(\omega'_{ij})_{1\leq i,j\leq 4}=\left(\begin{array}{cccc}1&1&1&1/4\\ 1&1&1&1\\ 1&1&1&1/2\\ 4&1&2&1\end{array}\right)
\end{equation}
Then it is easy to see that $X$ and $X'$ are birational since the matrix:
\begin{equation}\nonumber
B=\begin{pmatrix}
b_{22} & b_{23} & b_{24} \\
b_{32} & b_{33} & b_{34} \\
b_{42} & b_{43} & b_{44} \end{pmatrix}
=\begin{pmatrix}
1 & 2 & 0 \\
0 & 1 & 1 \\
0 & 1 & 2 \end{pmatrix}\end{equation}
is relating $q_{(1ij)}(\omega')=:q'_{ij}$ and $q_{(1ij)}(\omega)=:q_{ij}$ as in theorem \ref{bir class} (see previous example for similar computations). It is also clear, by corollary \ref{P3points}, that they have the same point variety (one hyperplane and three lines). However, looking at the $q$-cyclic numbers
\begin{equation}\nonumber
q_{(123)}(\omega)=1,\ \ \ q_{(124)}(\omega)=2,\ \ \ q_{(134)}(\omega)=8,\ \ \  q_{(234)}(\omega)=4\ \ {\rm and} 
\end{equation}
\begin{equation}\nonumber
q_{(123)}(\omega')=1,\ \ \ q_{(124)}(\omega')=4,\ \ \ q_{(134)}(\omega')=2,\ \ \ q_{(234)}(\omega')=1/2 
\end{equation}
and using theorem \ref{graded equivalence} we conclude that $Gr(S_{\omega})$ and $Gr(S_{\omega'})$ are not equivalent, hence showing that, in general,  $(3)\Longrightarrow (1)$ of theorem \ref{Mori} fails for $\mathbb{P}^3$ even with the additional condition of isomorphic point varieties.
\end{example}

The question of whether $(2)\Rightarrow (1)$ of theorem \ref{Mori} holds for higher dimensions remains without an answer. We, however, conjecture it to be true.

\begin{conjecture}
If $\mathbb{P}^{n-1}_{\omega}\cong\mathbb{P}^{n-1}_{\omega'}$ then $Gr(S_{\omega})\cong Gr(S_{\omega'})$.
\end{conjecture}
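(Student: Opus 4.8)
The plan is to show that an isomorphism of noncommutative projective spaces is forced to preserve the whole ample sequence of shifts of the structure sheaf, not merely the structure sheaf itself, so that Theorem \ref{graded equivalence} applies. By definition an isomorphism $\mathbb{P}^{n-1}_{\omega}\cong\mathbb{P}^{n-1}_{\omega'}$ is an equivalence $F\colon tails(S_{\omega})\to tails(S_{\omega'})$ with $F(O_{\omega})=O_{\omega'}$, and this lifts to a derived equivalence $\tilde F\colon D^b(tails(S_{\omega}))\to D^b(tails(S_{\omega'}))$. The condition in Theorem \ref{graded equivalence} I would aim for is (4), namely $B_{\omega}\cong B_{\omega'}$: since $B_{\omega}=\mathrm{End}(\bigoplus_{i=0}^{n-1}O_{\omega}(i))$, it suffices to prove that $\tilde F$ carries the exceptional collection $(O_{\omega},O_{\omega}(1),\dots,O_{\omega}(n-1))$ to $(O_{\omega'},O_{\omega'}(1),\dots,O_{\omega'}(n-1))$, up to the relabeling $\sigma$ already built into condition (1). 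An isomorphism of endomorphism algebras then gives (4), and (4)$\Rightarrow$(2) yields $Gr(S_{\omega})\cong Gr(S_{\omega'})$.

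First I would record the constraints that come for free. By Theorem \ref{invariant point} the point varieties are isomorphic, so by Proposition \ref{ptvar} the combinatorial pattern of which $q$-cyclic numbers $q_{(ijk)}(\omega)$ equal $1$ agrees with that of $\omega'$ up to some $\sigma\in\Sigma_n$; after relabeling I may assume the patterns coincide. Next I would exploit the Fano-type rigidity of these categories. Since $S_{\omega}$ is AS-regular of dimension $n$ with Gorenstein parameter $n$ and diagonal Nakayama automorphism, the Serre functor of $D^b(tails(S_{\omega}))$ is $s^{-n}[n-1]$, where $s$ is the shift and the Nakayama twist acts trivially on the objects $O_{\omega}(i)$ (an algebra automorphism sends a free module to a free module). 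As any equivalence commutes with Serre functors, applying this to $\tilde F(O_{\omega})=O_{\omega'}$ gives $\tilde F(O_{\omega}(-n))=O_{\omega'}(-n)$, and iterating (using that $\tilde F$ and $s$ both commute with the Serre functor) gives $\tilde F(O_{\omega}(-kn))=O_{\omega'}(-kn)$ for all $k$. This pins the image of the period-$n$ sublattice of shifts.

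The heart of the argument is to upgrade this to control of the intermediate shifts $O_{\omega}(1),\dots,O_{\omega}(n-1)$. Here I would use that $\{O_{\omega}(i)\}_{i\in\mathbb{Z}}$ is the geometric helix of the Beilinson collection, with period governed precisely by the Serre functor computed above, and that it is an ample sequence (the input to the Bondal--Orlov theorem recalled in the proof of (4)$\Rightarrow$(3)). The aim is to show that any exact equivalence fixing $O_{\omega}$ and compatible with this Serre periodicity must send the helix to the helix $\{O_{\omega'}(i)\}$ in an order-preserving fashion. Concretely, one would characterize the $O_{\omega}(i)$ intrinsically among objects of the heart by the growth and vanishing of $\mathrm{Hom}(O_{\omega},O_{\omega}(i))$ (the Hilbert series of $S_{\omega}$) together with ampleness, these being data preserved by $\tilde F$, and use monotonicity to force $\tilde F(O_{\omega}(i))=O_{\omega'}(i)$ for $0\le i\le n-1$, hence $B_{\omega}\cong B_{\omega'}$.

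The main obstacle is exactly this last step: the definition of isomorphism preserves only the single object $O_{\omega}$ and not the polarization $s$, whereas Theorem \ref{Zhang twist}(3) and Theorem \ref{graded equivalence} require the shifts to be respected. What must be excluded is an \emph{exotic} autoequivalence fixing $O_{\omega}$ but moving $O_{\omega}(1)$ off $O_{\omega'}(1)$; equivalently, one must show that every full exceptional collection beginning at $O_{\omega'}$ and embedding in a helix with the correct Serre-periodicity is the standard Beilinson collection, i.e.\ that the available mutations are obstructed. This is a polarization-rigidity statement expected from the Fano/AS-regular nature of $S_{\omega}$ but not available off the shelf in the noncommutative, non-generic setting. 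Establishing it, most plausibly by combining the Serre-functor constraint with a classification of the hearts (or $t$-structures) compatible with the given helix in the spirit of Orlov's reconstruction, is where I expect the real difficulty to lie; the point-variety and Serre-functor reductions are comparatively routine.
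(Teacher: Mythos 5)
The first thing to note is that the paper does not prove this statement: it is posed as a conjecture, and the author says explicitly that the question of whether (2)$\Rightarrow$(1) of Theorem \ref{Mori} holds in higher dimensions ``remains without an answer.'' So there is no proof in the paper to compare yours against; the only question is whether your argument settles the conjecture. It does not, and you concede as much in your final paragraph. The two reductions you carry out are sound but inexpensive: invariance of the point variety (Theorem \ref{invariant point} plus Proposition \ref{ptvar}) constrains the pattern of $q$-cyclic numbers equal to $1$, and uniqueness of the Serre functor --- which for these AS-regular algebras acts on the objects $O_{\omega}(i)$ as the composite of the $(-n)$-shift with $[n-1]$, the Nakayama twist being harmless on free modules --- forces $\tilde F(O_{\omega}(kn))\cong O_{\omega'}(kn)$ for all $k\in\mathbb{Z}$. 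But neither of these touches the intermediate shifts $O_{\omega}(1),\dots,O_{\omega}(n-1)$, and your proposed treatment of them (characterize $O_{\omega}(i)$ ``intrinsically'' by Hom-growth and ampleness, then ``use monotonicity'') is not an argument: no statement is available, in this paper or in the literature it cites, excluding an equivalence that fixes $O_{\omega}$ yet fails to align the rest of the helix, and for noncommutative $S_{\omega}$ there is no classification of autoequivalences, of full exceptional collections, or of the relevant mutations from which such rigidity could be extracted. The distance between ``$F$ preserves $O_{\omega}$'' (the paper's definition of isomorphism of noncommutative projective spaces) and ``$F$ preserves all $\pi S_{\omega}(n)$'' (the hypothesis of Theorem \ref{Zhang twist}(3), hence the route to Theorem \ref{graded equivalence}) is not a technicality --- it is the entire content of the conjecture, and your proposal reduces the conjecture to a polarization-rigidity statement that is at least as hard.

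What your write-up does accomplish is an honest reduction, and it points to where genuine progress could start: Mori's Theorem \ref{Mori} establishes exactly this implication for $n=3$, so the natural test of your strategy is to check whether your Serre-functor and point-variety constraints, combined with the explicit case analysis of point varieties in Corollary \ref{P3points} and Proposition \ref{ptvar}, reproduce Mori's result for $\mathbb{P}^{2}_{\omega}$ and then to see which step fails to extend. As it stands, though, the proposal is a program with its central step missing, not a proof.
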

\end{section}

\end{document}